\newtheorem{theorem}{Theorem}[section]
\newtheorem{lemma}[theorem]{Lemma}
\newtheorem{definition}{Definition}[section]
\theoremstyle{remark} 
\theoremstyle{definition} 
\numberwithin{equation}{section}
\newif\ifcomment \commentfalse \def\commentON{\commenttrue}
\long\outer\def\BC#1\EC{\ifcomment \sloppy \par
\# \ldots\dotfill {\em #1} \dotfill \# \par \fi } \commentON
\newcommand{\remove}[1]{}
\newcommand{\eps}{\ensuremath{\varepsilon}}
\newcommand{\R}{{{\mathbb{R}}}}
\begin{document}

\title[A two-species cross attraction system]{ On a two-species  cross attraction system in higher dimensions}

\author[{E. A. Carlen}]{Eric A. Carlen}
\address[ Eric A. Carlen]{\newline
	Department of Mathematics\newline
        Rutgers University \newline
	Piscataway, NJ, USA}
\email{carlen@math.rutgers.edu}
\urladdr{http://www.math.rutgers.edu/~carlen/}

\author[{S. Ulusoy}]{Suleyman Ulusoy}
\address[ Suleyman Ulusoy]{\newline
	Department of Mathematics and Natural Sciences\newline
        American University of Ras Al Khaimah\newline
	PO Box 10021, Ras Al Khaimah, UAE}
\email{suleyman.ulusoy@aurak.ac.ae}
\urladdr{https://aurak.ac.ae/en/dr-suleyman-ulusoy/\newline
$https://www.researchgate.net/profile/Suleyman_{-}Ulusoy$}

\subjclass[2000]{Primary  35K65;  35B45, 35J20}

\keywords{degenerate parabolic equation, energy functional, gradient flow, free-energy solutions, blow-up, global existence}


\date{\today}

\begin{abstract}
We consider a degenerate chemotaxis model with two-species and
two-stimuli in dimension $d \geq 3$. Under the hypothesis of integrable initial data with finite second moment and energy, we show local-in-time existence for any mass of free-energy solutions, namely weak solutions with some free energy estimates. We exhibit that the qualitative behavior of solutions is decided by a set of  critical values: there is a critical value of a parameter pair  in the system of  equations for which there is a global-in-time energy solution and   there exist blowing-up free-energy solutions under a criticality condition is violated for the  parameter pair.

\end{abstract}

\maketitle


\section{Introduction}\label{sec:intro}

Inspired by \cite{C-L}(see also \cite{20, 30}), for modeling the interaction and motion of two cell populations in breast cancer cell invasion models in $\R^d, \, d \geq 3$ we propose the following chemotaxis kind of system with two chemicals and a nonlinear diffusion term:
\begin{equation}\label{a1}
\begin{split}
\partial_t u &= \alpha_1 \left( \frac{d-2}{d} \right) ||u||_{\frac{2d}{d+2}}^{4/(d+2)} \Delta(u^{\frac{2d}{d+2}}) -  \nabla \cdot \left( u \nabla v\right), \quad  x \in \R^d, t > 0, \\
-\Delta v &= w,  \quad  x \in \R^d, t > 0,\\
\partial_t w &= \alpha_2 \left( \frac{d-2}{d} \right) ||w||_{\frac{2d}{d+2}}^{4/(d+2)} \Delta(w^{\frac{2d}{d+2}}) -  \nabla \cdot \left( w \nabla z\right),  \quad  x \in \R^d, t > 0,  \\
-\Delta z &= u,   \quad  x \in \R^d, t > 0,
\end{split}
\end{equation}
with initial data satisfying
\begin{equation}\label{a2}
u(x, 0) = u_{0}(x); \quad w(x, 0) = w_{0}(x), \, x \in \R^d.
\end{equation}
In \eqref{a1} $u(x,t)$ and $w(x,t)$ denote the macrophages and the tumor cells; $v(x,t)$ and $z(x,t)$ denote the concentration of the chemicals produced by $w(x,t)$ and $u(x,t)$, respectively and    $\alpha_1, \alpha_2 \in (0,1]$ . For simplicity   the initial data are assumed to satisfy
\begin{equation}\label{a3}
u_0 \in L^1(\R^d; (1+|x|^2)dx) \cap L^{\infty}(\R^d), \, \nabla u_0^{\frac{2d}{d+2}} \in L^2(\R^d), \, \, u_0 \geq 0,
\end{equation}
\begin{equation}\label{a4}
w_0 \in L^1(\R^d; (1+|x|^2)dx) \cap L^{\infty}(\R^d), \, \nabla w_0^{\frac{2d}{d+2}} \in L^2(\R^d), \, \, w_0 \geq 0.
\end{equation}

We note that $v(x,t)$ and $z(x,t)$ are given by
\begin{equation}\label{a5}
v(x, t) = \mathcal{K} \ast w = c_d \iint_{\R^d \times \R^d} \frac{w(y,t)}{|x-y|^{d-2}} \, dy,
\end{equation}
\begin{equation}\label{a6}
z(x, t) = \mathcal{K} \ast u = c_d \iint_{\R^d \times \R^d}  \frac{u(y,t)}{|x-y|^{d-2}} \, dy,
\end{equation}
with $\mathcal{K}(x) = \frac{c_d}{|x|^{d-2}}$ and $c_d$ is the surface area of the sphere $S^{d-1}$ in $\R^d$. We can rewrite the system \eqref{a1} as follows:
\begin{equation}\label{a7}
\begin{split}
\partial_t u &= \alpha_1 \left( \frac{d-2}{d}\right) ||u||_{\frac{2d}{d+2}} \Delta(u^{\frac{2d}{d+2}}) -  \nabla \cdot \left( u \nabla \mathcal{K} \ast w \right),\\
\partial_t w &= \alpha_2 \left( \frac{d-2}{d}\right) ||w||_{\frac{2d}{d+2}} \Delta(w^{\frac{2d}{d+2}}) -  \nabla \cdot \left( w \nabla \mathcal{K} \ast u \right),
\end{split}
\end{equation}
with initial data \eqref{a2}. Note that mass is conserved for \eqref{a1}(or \eqref{a7}):
\begin{equation}\label{a8}
M_1 := \int_{\R^d} u(x, t) \, dx = \int_{\R^d} u_0(x) \, dx; \quad M_2 := \int_{\R^d}  w(x,t) \, dx = \int_{\R^d} w_0(x) \, dx.
\end{equation}
There is a free energy associated with equation \eqref{a1}:
\begin{equation}\label{a9}
E_\alpha[u(t), w(t)] := \alpha_1 ||u||_{\frac{2d}{d+2}}^2 + \alpha_2 ||w||_{\frac{2d}{d+2}}^2 - c_d    H[u, w],
\end{equation}
where
\begin{equation}\label{a10}
H[u, w] :=  \iint_{\R^d \times \R^d} \frac{u(x, t) w(y,t)}{|x-y|^{d-2}}   \, dx \, dy.
\end{equation}
The energy functional can be rewritten as
\begin{equation}\label{a9b}
E_\alpha[u(t), w(t)] := (\sqrt{\alpha_1} ||u||_{\frac{2d}{d+2}} - \sqrt{\alpha_2} ||w||_{\frac{2d}{d+2}})^ 2   + \left( 2\sqrt{\alpha_1\alpha_2}||u||_{\frac{2d}{d+2}}||w||_{\frac{2d}{d+2}}  - c_d    H[u, w]\right)\ .
\end{equation}
By the HLS inequality, (see \eqref{b3} below), the second term is positive for all $u,w$ in case
\begin{equation}\label{a9c}
\frac{ 2\sqrt{\alpha_1\alpha_2}}{c_d} \geq C_{HLS}\ ,
\end{equation}
the precise value of which was determined by Lieb for the exponents considered here.   Suppose however, that \eqref{a9c} is violated. Let
$$\delta := C_{HLS} - \frac{ 2\sqrt{\alpha_1\alpha_2}}{c_d}\ .$$
Then if we choose $u$ and $w$ to be HLS optimizers such that
$$
\sqrt{\alpha_1} ||u||_{\frac{2d}{d+2}} = \sqrt{\alpha_2} ||w||_{\frac{2d}{d+2}}\ ,
$$
$$
E_\alpha[u(t), w(t)]  = - \delta ||u||_{\frac{2d}{d+2}}||w||_{\frac{2d}{d+2}}\ .
$$
Thus, \eqref{a9c} is the necessary and sufficient condition for non-negativity of $E_\alpha[u(t), w(t)]$.

The connection between the free energy \eqref{a9} and the equation \eqref{a1} is that the latter  can be written as a gradient flow with respect to Wasserstein metric,  and  then formally one has the dissipation of this free energy
\begin{equation}\label{a11}
\begin{split}
\frac{d}{dt}E_\alpha[u(t), w(t)] &= - \int_{\R^d} u\left|  \nabla \left( \alpha_1  2||u||_{\frac{2d}{d+2}}^{\frac{4}{d+2}} u^{\frac{d-2}{d+2}}  \right) -  \nabla v  \right|^2   \, dx \\
&\quad - \int_{\R^d} w \left|  \nabla \left( \alpha_2 2||w||_{\frac{2d}{d+2}}^{\frac{4}{d+2}} w^{\frac{d-2}{d+2}} \right) -  \nabla z  \right|^2   \, dx.
\end{split}
\end{equation}

We now define the weak and free energy solutions for \eqref{a1}-\eqref{a2}.

\begin{definition}\label{def:weaksol}

Let $d \geq 3$  and $T > 0$. Suppose $(u_0, w_0)$ satisfies  \eqref{a3}-\eqref{a4}. Then $(u, w)$	 of  nonnegative functions defined in $\R^d \times (0, T)$ is called a weak solution if

\begin{enumerate}[(i)]

\item  $ (u, w)  \in \left(  C([0,T); L^1(\R^d))  \cap L^{\infty}(\R^d \times (0, T))  \right)^2 $  and \\ $\left(u^{\frac{2d}{d+2}}, w^{\frac{2d}{d+2}}\right)  \in \left(  L^2(0, T; H^1(\R^d))\right)^2$.

 \item $(u, w)$ satisfies

\begin{equation}\label{a12}
\begin{split}
&\int_0^T \int_{\R^d} u \phi_{1t} \,  dx \,   dt  +  \int_{\R^d}  u_0(x) \phi_{1}(x, 0) \, dx   \\
& = \int_0^T \int_{\R^d} \left( \alpha_1 2||u||_{\frac{2d}{d+2}}^{\frac{4}{d+2}}  \nabla(u^{\frac{2d}{d+2}}) -  u \nabla v \right)\cdot \nabla \phi_1 \, dx \, dt.\\
& \int_0^T \int_{\R^d} w \phi_{2t} \,  dx \,   dt  +  \int_{\R^d}  w_0(x) \phi_{2}(x, 0) \, dx   \\
& = \int_0^T \int_{\R^d} \left(\alpha_2 2||w||_{\frac{2d}{d+2}}^{\frac{4}{d+2}}  \nabla(w^{\frac{2d}{d+2}}) -  w \nabla z \right)\cdot \nabla \phi_2 \, dx \, dt,
\end{split}
\end{equation}
for any test functions $\phi_i \in \mathcal{D}(\R^d \times [0, T));  i = 1, 2$ with $v = \mathcal{K} \ast w$ and $z = \mathcal{K} \ast u.$

\end{enumerate}

\end{definition}

\begin{definition}\label{def:free-energy-sol}
Let $T > 0$. Then $(u, w)$ is called a free energy solution with initial data $(u_0, w_0)$ on $(0, T)$ if $(u, w)$ is a weak solution and moreover satisfies
\begin{equation}\label{a13}
\left( u^{\frac{3d-2}{2(d+2)}}, w^{\frac{3d-2}{2(d+2)}} \right) \in \left( L^2(0, T; H^1(\R^d))\right)^2,
\end{equation}
and
\begin{equation}\label{a14}
\begin{split}
&E_\alpha[u(t), w(t)] + \int_0^t \int_{\R^d} u \left| \nabla \left( \alpha_1 2||u||_{\frac{2d}{d+2}}^{\frac{4}{d+2}} u^{\frac{d-2}{d+2}} \right) -  \nabla v\right|^2 \, dx \, ds \\
& + \int_0^t \int_{\R^d} w \left| \nabla \left(\alpha_2 2||w||_{\frac{2d}{d+2}}^{\frac{4}{d+2}} w^{\frac{d-2}{d+2}} \right) -  \nabla z\right|^2 \, dx \, ds \leq E_\alpha[u_0, w_0],
\end{split}
\end{equation}
for all $t \in (0, T)$ with $v = \mathcal{K} \ast w$ and $z = \mathcal{K} \ast u$.
\end{definition}

\begin{theorem}\label{thm:existence}
Suppose that the initial data $(u_0, w_0)$ with $||u_0||_1 = M_1,  ||w_0||_1 = M_2$ satisfies \eqref{a3} and \eqref{a4}.

\begin{enumerate}[(i)]

\item If $\alpha_1, \alpha_2$ satisfy \eqref{a9c}, then  there exists a global free energy solutions.

\item If $\alpha_1, \alpha_2$ do not satisfy \eqref{a9c}, then it is possible to construct large initial data ensuring blow up in finite time.

\end{enumerate}

\end{theorem}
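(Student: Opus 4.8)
The plan is to treat the two parts by complementary mechanisms. For (i), condition \eqref{a9c} is used only through the two facts already recorded in the excerpt: that $E_\alpha$ is bounded below, indeed $E_\alpha[u,w]\ge\big(\sqrt{\alpha_1}\|u\|_{\frac{2d}{d+2}}-\sqrt{\alpha_2}\|w\|_{\frac{2d}{d+2}}\big)^2\ge 0$ by the sharp Hardy--Littlewood--Sobolev inequality, and that $E_\alpha$ is non-increasing along solutions; together these supply the coercivity needed to propagate an approximation scheme for all time. For (ii), the failure of \eqref{a9c} permits initial data of \emph{negative} energy, after which a virial identity forces the second moment to vanish in finite time.

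\emph{Part (i).} First I would regularize: mollify and truncate $(u_0,w_0)$ to smooth compactly supported data obeying \eqref{a3}--\eqref{a4} with nearby mass and energy, replace $\mathcal K$ by a smooth bounded kernel $\mathcal K_\eps\le \mathcal K$, and if necessary add a small non-degenerate diffusion; standard quasilinear parabolic theory then gives global smooth nonnegative solutions $(u^\eps,w^\eps)$, the regularized drift $\nabla v^\eps$ being bounded. (Equivalently, since $E_\alpha\ge 0$ under \eqref{a9c}, one may run the JKO minimizing-movement scheme for the Wasserstein gradient flow of $E_\alpha$ --- existence of a minimizer at each step is exactly what requires $\inf E_\alpha>-\infty$, hence \eqref{a9c}, and is precisely what fails otherwise.) Then I collect $\eps$-uniform estimates on an arbitrary $[0,T]$: mass conservation; the free-energy inequality $E_\alpha^\eps[u^\eps(t),w^\eps(t)]+\int_0^t D^\eps\le E_\alpha^\eps[u_0^\eps,w_0^\eps]\le C$, which with $E_\alpha^\eps\ge 0$ gives $\int_0^\infty D^\eps\,ds\le C$; the identity $u|\nabla u^{\frac{d-2}{d+2}}|^2=c\,|\nabla u^{\frac{3d-2}{2(d+2)}}|^2$, which lets the dissipation control $\int_0^t\|\nabla (u^\eps)^{\frac{3d-2}{2(d+2)}}\|_2^2$ (the cross terms with $\nabla v^\eps,\nabla z^\eps$ absorbed by Young's inequality); and the virial identity $\tfrac{d}{dt}\int|x|^2(u^\eps+w^\eps)=2(d-2)E_\alpha^\eps[u^\eps(t),w^\eps(t)]\le 2(d-2)C$, which keeps the second moments finite on $[0,T]$. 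Combining the energy bound, the dissipation bound, Sobolev embedding, interpolation with the conserved mass, and a Moser--Alikakos iteration in which the chemotactic term is controlled, one obtains $\eps$-uniform $L^\infty((0,T)\times\R^d)$ bounds. Finally, Aubin--Lions-type compactness yields $(u^\eps,w^\eps)\to(u,w)$ strongly enough to pass to the limit in \eqref{a12}; weak lower semicontinuity of $E_\alpha$ and convexity of the dissipation give \eqref{a14}, \eqref{a13} follows from the dissipation bound, and a diagonal argument over $T\to\infty$ produces a global free-energy solution.

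\emph{Part (ii).} When \eqref{a9c} fails, set $\delta:=C_{HLS}-\tfrac{2\sqrt{\alpha_1\alpha_2}}{c_d}>0$. Take $(u_0,w_0)$ to be Lieb's HLS optimizers for the exponent $\tfrac{2d}{d+2}$, scaled so that $\sqrt{\alpha_1}\|u_0\|_{\frac{2d}{d+2}}=\sqrt{\alpha_2}\|w_0\|_{\frac{2d}{d+2}}$; then $E_\alpha[u_0,w_0]<0$ by the computation recorded after \eqref{a9c}. Truncating at large radius to meet the finite-second-moment requirement in \eqref{a3}--\eqref{a4} perturbs the energy arbitrarily little, and rescaling the total mass makes $|E_\alpha[u_0,w_0]|$ as large as desired (``large initial data''). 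For a free-energy solution with such data, the virial identity
\[ \frac{d}{dt}\int_{\R^d}|x|^2\big(u(t)+w(t)\big)\,dx=2(d-2)\,E_\alpha[u(t),w(t)] \]
holds: multiply the two equations by $|x|^2$, use $\Delta|x|^2=2d$, symmetrize the two drift integrals into $-2c_d(d-2)H[u,w]$, and note $\|u\|_{\frac{2d}{d+2}}^{\frac{4}{d+2}}\int u^{\frac{2d}{d+2}}=\|u\|_{\frac{2d}{d+2}}^2$; rigorously one derives it for the regularized solutions with a cutoff $|x|^2\chi(x/R)$, sends $R\to\infty$ using the second-moment bound, and passes to the limit. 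Combining with \eqref{a14},
\[ \frac{d}{dt}\int_{\R^d}|x|^2\big(u(t)+w(t)\big)\,dx\le 2(d-2)\,E_\alpha[u_0,w_0]<0, \]
so $\int|x|^2(u(t)+w(t))\,dx$ must reach $0$ at some finite $T^\star\le \int|x|^2(u_0+w_0)\,dx/\big(2(d-2)|E_\alpha[u_0,w_0]|\big)$, contradicting its non-negativity; hence the local free-energy solution cannot be extended past $T^\star$.

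\emph{Main obstacle.} The crux is the $\eps$-uniform $L^\infty$ (no-concentration) bound in Part (i): one must show that \eqref{a9c} together with the free-energy dissipation genuinely rules out finite-time aggregation --- the higher-dimensional counterpart of the sub/critical-mass analysis of the planar Keller--Segel system --- and the degenerate, $\|u\|_{\frac{2d}{d+2}}$-weighted diffusion, whose coefficient may become small, obstructs the usual iteration. The equality case of \eqref{a9c} is the genuine endpoint, where $E_\alpha$ controls only $\sqrt{\alpha_1}\|u\|_{\frac{2d}{d+2}}-\sqrt{\alpha_2}\|w\|_{\frac{2d}{d+2}}$ and one must work entirely through the dissipation.
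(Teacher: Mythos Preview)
Your proposal is correct and follows essentially the same architecture as the paper: for (i) a regularized system, a priori estimates driven by the free-energy inequality and HLS, a Moser-type iteration to $L^\infty$, and compactness to pass to the limit; for (ii) the virial identity $\tfrac{d}{dt}\int|x|^2(u+w)=2(d-2)E_\alpha[u,w]$ together with HLS-optimizer initial data of negative energy. The only noteworthy difference in emphasis is that the paper extracts the $L^\infty$ bound by first reading off a uniform $L^{2d/(d+2)}$ bound directly from $E_\alpha[u(t),w(t)]\le E_\alpha[u_0,w_0]$ and then running a Gagliardo--Nirenberg/H\"older bootstrap (their Lemma~2.3) rather than routing through the time-integrated dissipation as you do; your observation that the equality case of \eqref{a9c} is the delicate endpoint is well taken and is not singled out in the paper's argument.
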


\section{ Approximate System}\label{sec:regularization}

For the existence of solutions as usual we first consider a regularized system

\begin{equation}\label{b1}
\begin{split}
\partial_t u_{\eps} &= \alpha_1  \left( \frac{d-2}{d} \right) ||u_{\eps}||_{\frac{2d}{d+2}}^{4/(d+2)} \Delta((u_{\eps} + \eps)^{\frac{2d}{d+2}}) -  \nabla \cdot \left( u_{\eps} \nabla v_{\eps}\right), \, \,  x \in \R^d, t > 0, \\
 v_{\eps} &= \mathcal{K} \ast w_{\eps},  \quad  x \in \R^d, t > 0,\\
\partial_t w_{\eps} &= \alpha_2 \left(  \frac{d-2}{d} \right) ||w_{\eps}||_{\frac{2d}{d+2}}^{4/(d+2)} \Delta((w_{\eps}+\eps)^{\frac{2d}{d+2}}) -  \nabla \cdot \left( w_{\eps} \nabla z_{\eps}\right),  \,  \,   x \in \R^d, t > 0,  \\
z_{\eps} &= \mathcal{K} \ast u_{\eps},   \quad  x \in \R^d, t > 0,
\end{split}
\end{equation}
with initial data satisfying
\begin{equation}\label{b2}
u_{\eps}(x, 0) = u_{0}^{\eps}(x) \geq 0; \quad w_{\eps}(x, 0) = w_{0}^{\eps}(x) \geq 0, \, x \in \R^d.
\end{equation}
with $u_0^{\eps}$ and $w_0^{\eps}$ being the convolution of $u_0$ and $w_0$ with a sequence of mollifiers and $||u_0^\eps||_1 = ||u||_1 = M_1$ and $||w_0^\eps||_1 = ||w||_1 = M_2$. As usual we will obtain a priori estimates for the regularized system \eqref{b1}-\eqref{b2}  to ensure the existence of weak or free energy solution as $\eps$ tends to 0.

By following the general procedure in the single population chemotaxis systems  \cite{43, ulusoy, carlen-ulusoy} one can  obtain the following lemma for which no proof is provided here.

\begin{lemma}\label{lemma2.1}
Let $d \geq 3$. There exists  $T_{max}^{\eps}  \in (0, \infty]$ denoting the maximal existence time such that the regularized system \eqref{b1}-\eqref{b2} has a unique nonnegative solution $(u_\eps, w_\eps) \in \left(W_p^{2,1}(Q_T)\right)^2$ with some $p>1$, where $Q_T = \R^d \times (0, T)$ with $T \in (0, T_{max}^{\eps}$ and
\begin{equation*}
W_p^{2,1}(Q_T) := \{u \in L^p(0, T; W^{2,p}(\R^d) \cap W^{1, p}(0, T; L^p(\R^d)))\}.
\end{equation*}
Moreover, if $T_{max}^{\eps} < \infty$ then
\begin{equation*}
\lim_{t \to T_{max}^{\eps} } \left( ||u_\eps (\cdot, t)||_{\infty} +   ||w_\eps (\cdot, t)||_{\infty}  \right)  = \infty.
\end{equation*}

\end{lemma}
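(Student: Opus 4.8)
The plan is to prove the lemma exactly as for the single-species chemotaxis systems cited above: the $\eps$-shift has removed the degeneracy, so \eqref{b1}--\eqref{b2} is a uniformly parabolic quasilinear system, and one obtains a unique local \emph{classical} solution by a Banach fixed-point argument, then establishes non-negativity via the maximum principle, and finally runs a continuation argument that produces the blow-up alternative.

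Fix $\eps>0$ and set $m:=\frac{2d}{d+2}\in(1,2)$, so that $\Delta((u+\eps)^m)=m\,\Div\big((u+\eps)^{m-1}\nabla u\big)$ with $m(u+\eps)^{m-1}\ge m\,\eps^{\,m-1}>0$ for $u\ge0$. First I would set up the iteration: given a non-negative pair $(\bar u,\bar w)$ in a closed ball of $\big(C([0,T];L^2(\R^d))\cap L^\infty(Q_T)\big)^2$, put $\bar v=\mathcal K\ast\bar w$, $\bar z=\mathcal K\ast\bar u$ and solve the two \emph{decoupled linear} parabolic Cauchy problems
\[
\partial_t u=\alpha_1\Big(\frac{d-2}{d}\Big)\norm{\bar u}_{m}^{4/(d+2)}\,m\,\Div\big((\bar u+\eps)^{m-1}\nabla u\big)-\Div(u\,\nabla\bar v),\qquad u(\cdot,0)=u_0^{\eps},
\]
and the analogous problem for $w$. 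Because $\bar u,\bar w\in L^\infty$, because $\nabla\mathcal K(x)$ is comparable to $\abs{x}^{1-d}$ near $0$ and decays at infinity so that $\norm{\nabla\bar v}_\infty\le\const(\norm{\bar w}_1+\norm{\bar w}_\infty)<\infty$, and because $\Delta\bar v=-\bar w\in L^\infty$, each of these is a uniformly parabolic linear equation with bounded coefficients; De Giorgi--Nash regularity together with a Calder\'on--Zygmund/parabolic-Schauder bootstrap gives a unique solution $u\in W^{2,1}_p(Q_T)$ for every $p\in(1,\infty)$ (likewise $w$). Set $\mathcal T(\bar u,\bar w):=(u,w)$.

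Next I would show that $\mathcal T$ is a contraction for $T$ small. Writing each equation in the form $\partial_t u=\Div(a\nabla u)-\nabla\bar v\cdot\nabla u+\bar w\,u$ (using $-\Delta\bar v=\bar w$), non-negativity of the fixed point follows from the maximum principle, the sign $\bar w\ge0$ of the zeroth-order coefficient causing no trouble. For the contraction, subtract the equations for two iterates and run an $L^2$ energy estimate plus Gronwall, exploiting the local Lipschitz dependence on $(\bar u,\bar w)$ of the frozen data: $(\bar u+\eps)^{m-1}$ is Lipschitz on $[\eps,\infty)$ since $m-1\in(0,1)$, $t\mapsto\norm{\bar u}_m^{4/(d+2)}$ is Lipschitz because $\norm{\bar u}_m$ stays in a fixed compact subinterval of $(0,\infty)$, and $\nabla\bar v=\nabla\mathcal K\ast\bar w$ and $D^2\bar v$ depend linearly and boundedly (by Calder\'on--Zygmund) on $\bar w$. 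One concludes that $\mathcal T$ maps a suitable ball into itself and contracts it in the $C([0,T];L^2)$ metric once $T=T(\eps,M_1,M_2,\norm{u_0^{\eps}}_\infty,\norm{w_0^{\eps}}_\infty)$ is small, the stronger $W^{2,1}_p$ bound being stable under the passage to the limit; the same energy identity yields uniqueness in $\big(W^{2,1}_p(Q_T)\big)^2$, and a further bootstrap makes the solution classical.

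Finally, let $T_{max}^{\eps}$ be the supremum of existence times. If $T_{max}^{\eps}<\infty$ while $\limsup_{t\uparrow T_{max}^{\eps}}\big(\norm{u_\eps(\cdot,t)}_\infty+\norm{w_\eps(\cdot,t)}_\infty\big)=:N<\infty$, then all the quantities controlling the local existence time above remain bounded on $[0,T_{max}^{\eps})$, so the solution restarted from any $t_0<T_{max}^{\eps}$ exists for an additional time bounded below independently of $t_0$ and, by uniqueness, extends the given solution past $T_{max}^{\eps}$ --- contradicting maximality. Hence $T_{max}^{\eps}<\infty$ forces $\norm{u_\eps(\cdot,t)}_\infty+\norm{w_\eps(\cdot,t)}_\infty\to\infty$ as $t\uparrow T_{max}^{\eps}$. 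I expect the only genuine obstacle to be bookkeeping around the nonlocal, time-dependent diffusion coefficient $\norm{u_\eps}_m^{4/(d+2)}$ (and its $w$-analogue): one must check it is continuous in $t$ and stays bounded away from $0$ on short time intervals so that uniform parabolicity is never lost, which holds because the mollified datum satisfies $\norm{u_0^{\eps}}_m>0$; moreover all parabolic estimates must be used in their global-in-$x$ form on $\R^d$, the weighted-$L^1$ hypotheses \eqref{a3}--\eqref{a4} entering only to keep the mass-type quantities finite.
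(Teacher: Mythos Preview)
Your proposal is correct and is precisely the kind of argument the paper has in mind: the paper gives no proof of this lemma at all, stating only that ``by following the general procedure in the single population chemotaxis systems \cite{43, ulusoy, carlen-ulusoy} one can obtain the following lemma for which no proof is provided here.'' Your fixed-point scheme for the uniformly parabolic (thanks to the $\eps$-shift) quasilinear system, followed by the maximum principle for non-negativity and the standard continuation argument for the blow-up alternative, is exactly that general procedure adapted to the two-species setting, and your remark about keeping the nonlocal coefficients $\|u_\eps\|_m^{4/(d+2)}$, $\|w_\eps\|_m^{4/(d+2)}$ bounded away from zero on short time intervals is the one place where the present system requires a word beyond the cited single-species references.
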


We also recall here a version of the Hardy-Littlewood-Sobolev inequality(HLS) that if
\begin{equation*}
 \frac{1}{p} + \frac{1}{q}  = 1+ \frac{\lambda}{d}
\end{equation*}
and $h_1 \in L^p(\R^d)$,  $h_2 \in L^q(\R^d)$ with $p, q > 1$ then there exists a constant $C_{HLS} = C_{HLS}(d, \lambda, p) > 0$ such that
\begin{equation}\label{b3}
\left|  \iint_{\R^d \times \R^d}  \frac{h_1(x)h_2(y)}{|x-y|^{d-\lambda}}  \, dy \, dx  \right|  \leq C_{HLS} ||h_1||_p||h_2||_q.
\end{equation}

\begin{lemma}\label{lemma2.3}
Let $T \in (0, T_{max}^\eps]$. Suppose that there exists a constant  $C>0$ such that  solution $(u_\eps, w_\eps)$ of the system \eqref{b1}-\eqref{b2} with initial data $(u_0^\eps, w_0^\eps)$ being the convolution of $(u_0, w_0)$ satisfies $||u_\eps(t)||_{\frac{2d}{d+2}} \leq C$   and $||w_\eps(t)||_{\frac{2d}{d+2}} \leq C$ for $t \in (0, T)$. Then there exists a constant $C = C(d, u_0^\eps, w_0^\eps) > 0$ such that
\begin{equation}\label{b4}
||\left(u_\eps(t), w_\eps(t)\right)||_r \leq C,
\end{equation}
and
\begin{equation}\label{b5}
||\left(v_\eps(t), z_\eps(t)\right)||_r + ||\left(\nabla v_\eps(t), \nabla z_\eps(t)\right)||_r \leq C,
\end{equation}
for $r \in [1, \infty)$  and   $t \in (0, T)$.

\end{lemma}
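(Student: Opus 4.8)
The plan is to bootstrap from the assumed $L^{2d/(d+2)}$ bounds on $u_\eps,w_\eps$ by an iteration in the Lebesgue exponent, exploiting the structure of the porous-medium-type diffusion together with the smoothing of the Riesz potential $\mathcal K$. First I would derive the key differential inequality for $\frac{d}{dt}\int (u_\eps+\eps)^k\,dx$ for $k>1$: multiplying the first equation of \eqref{b1} by $k(u_\eps+\eps)^{k-1}$ and integrating by parts, the diffusion term contributes a good negative term of the form $-c\,k(k-1)\int (u_\eps+\eps)^{k-2+\frac{2d}{d+2}}|\nabla u_\eps|^2\,dx$, i.e. a multiple of $\|\nabla (u_\eps+\eps)^{(k-1)/2+d/(d+2)}\|_2^2$ up to constants depending on the assumed bound $\|u_\eps(t)\|_{2d/(d+2)}\le C$; the drift term $-\nabla\cdot(u_\eps\nabla v_\eps)$ contributes, after integration by parts, $-\int k(k-1)(u_\eps+\eps)^{k-1}\nabla u_\eps\cdot\nabla v_\eps$, which using $-\Delta v_\eps=w_\eps$ and Young's inequality is controlled by a fraction of the good gradient term plus lower-order powers of $u_\eps$ and $w_\eps$ times $w_\eps$ itself. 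Symmetrically for $w_\eps$.

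Next I would run the Moser/Alikakos iteration: interpolating the Gagliardo–Nirenberg–Sobolev inequality against the good dissipation term, one closes a recursion that upgrades $L^{p}$ control to $L^{2p/\cdot}$-type control with constants that remain summable, yielding a uniform bound $\|u_\eps(t)\|_r+\|w_\eps(t)\|_r\le C$ on $(0,T)$ for every finite $r$, with $C$ depending only on $d$, $r$, $T$, and on $\|u_0^\eps\|$, $\|w_0^\eps\|$ in the relevant norms (hence on $u_0,w_0$ since mollification does not increase these). This gives \eqref{b4}. I would treat the $L^\infty$-type endpoint, if needed, by the standard Moser limit $r\to\infty$, but since the statement only asks $r\in[1,\infty)$ this can be kept finite. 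The case $r=1$ is just mass conservation \eqref{a8}.

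For \eqref{b5}, once $u_\eps,w_\eps$ are bounded in every $L^r$, $r\in[1,\infty)$, and also in $L^1$ (mass), the potentials $v_\eps=\mathcal K\ast w_\eps$ and $z_\eps=\mathcal K\ast u_\eps$ are estimated by the Hardy–Littlewood–Sobolev inequality \eqref{b3} for $\|v_\eps\|_r$ and by the standard Riesz-potential/Calderón–Zygmund estimates for $\|\nabla v_\eps\|_r=\|\nabla\mathcal K\ast w_\eps\|_r$: since $\nabla\mathcal K$ has a kernel of order $|x|^{1-d}$, fractional integration gives $\|\nabla v_\eps\|_r\le C\|w_\eps\|_q$ for the appropriate $q\in(1,\infty)$ with $\frac1q=\frac1r+\frac1d$, and one splits into $q$ near $1$ and $q$ large using the two-sided bounds on $w_\eps$ in $L^1\cap L^{\text{large}}$ to cover all $r\in[1,\infty)$; the same for $\nabla z_\eps$ with $u_\eps$.

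The main obstacle I anticipate is making the drift term in the $L^k$-estimate genuinely subordinate to the diffusive dissipation \emph{uniformly in} $\eps$: the coefficient $\alpha_i(\frac{d-2}{d})\|u_\eps\|_{2d/(d+2)}^{4/(d+2)}$ is exactly what is controlled by hypothesis, so the argument must be arranged so that the good term's constant does not degenerate, and one must absorb $\int (u_\eps+\eps)^{k-1}w_\eps\,|\nabla v_\eps|$-type products using the already-established lower-order estimates in a triangular (induction on $k$) fashion rather than circularly. Handling the $+\eps$ shift in the porous-medium nonlinearity requires a little care — one wants estimates independent of $\eps\in(0,1]$ — but since $\eps\le 1$ and we already control $L^1$ and higher norms, $(u_\eps+\eps)^k\le C_k(u_\eps^k+1)$ on any ball and $\eps$-terms integrate against compact support harmlessly after a cutoff, or one works directly with the dissipation written in $(u_\eps+\eps)$ throughout. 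Everything else is routine interpolation.
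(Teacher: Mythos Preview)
Your proposal is correct and follows the same core mechanism as the paper: test the $u_\eps$-equation with a power, extract the good dissipation term $\|\nabla u_\eps^{(p+\frac{d-2}{d+2})/2}\|_2^2$, integrate the drift by parts and use $-\Delta v_\eps=w_\eps$ to convert it into the cross-term $\tfrac{p-1}{p}\int u_\eps^p w_\eps$, and then close via Gagliardo--Nirenberg with the assumed $L^{2d/(d+2)}$ bound. The only organizational difference is that you frame the argument as a Moser/Alikakos iteration (``triangular induction on $k$''), whereas the paper works at a single fixed $p$: it splits $\int u_\eps^p w_\eps$ and $\int u_\eps w_\eps^p$ by H\"older, applies Gagliardo--Nirenberg with the $L^{2d/(d+2)}$ norm as the low anchor in \emph{both} factors, checks that the resulting exponents on the two gradient norms sum to strictly less than $2$, and then absorbs everything by Young's inequality to get $y'(t)+y(t)\le C$ for $y=\tfrac1p\int(u_\eps^p+w_\eps^p)$; this delivers every $L^p$ bound directly, with no recursion. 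Your iteration would also work, but the paper's route is slightly cleaner and makes the role of the critical exponent $\tfrac{2d}{d+2}$ (which is exactly what makes the exponent sum subcritical) more transparent. The potential estimates are handled identically in spirit, via HLS/Riesz for $\nabla v_\eps,\nabla z_\eps$ and Calder\'on--Zygmund for second derivatives.
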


\begin{proof}

For $p>1$, we test  the first equation in \eqref{b1} by $u_\eps^{p-1}$ and integrate to find that
\begin{equation}\label{b6}
\begin{split}
\frac{1}{p}\frac{d}{dt} \int_{\R^d} u_\eps^p \, dx  &=   \underbrace{- \alpha_1 (\frac{d-2}{d})||u_\eps||_{\frac{2d}{d+2}}^{\frac{4}{d+2}} \int_{\R^d} \nabla(u_\eps^{p-1}) \cdot \nabla(u_\eps + \eps)^{\frac{2d}{d+2}} \, dx}_{I} \\
& \quad + \underbrace{ \int_{\R^d}   u_\eps \nabla (u_\eps^{p-1}) \cdot \nabla v_\eps \, dx}_{II}.
\end{split}
\end{equation}
It is not difficult to see that
\begin{equation}\label{b7}
\begin{split}
I &= -2 \alpha_1\frac{d-2}{d+2}(p-1)||u_\eps||_{\frac{2d}{d+2}}^{\frac{4}{d+2}} \int_{\R^d} u_\eps^{p-2}(u_\eps + \eps)^{\frac{2d}{d+2}-1}|\nabla u_\eps|^2 \, dx \\
&\leq -2 \alpha_1 \frac{d-2}{d+2}(p-1)||u_\eps||_{\frac{2d}{d+2}}^{\frac{4}{d+2}} \int_{\R^d} u_\eps^{p+ \frac{2d}{d+2}-3}|\nabla u_\eps|^2 \, dx \\
&=-4 \alpha_1 \frac{d-2}{d+2}(p-1) \frac{1}{p+\frac{d-2}{d+2}}\int_{\R^d} \left|  \nabla\left(u_\eps^{\frac{p+(\frac{d-2}{d+2})}{2}} \right) \right|^2 \, dx.
\end{split}
\end{equation}
\begin{equation}\label{b8}
II =  \frac{p-1}{p}  \int_{\R^d} u_\eps^p w_\eps \, dx.
\end{equation}
Similarly, by multiplying the third equation in \eqref{b1}  by $w_\eps^{p-1}$ and using $-\Delta z_\eps = u_\eps$
\begin{equation}\label{b9}
\begin{split}
\frac{1}{p} \frac{d}{dt} \int_{\R^d} w_\eps^p \, dx &\leq  -4 \alpha_2 \frac{d-2}{d+2}(p-1) \frac{1}{p+\frac{d-2}{d+2}}\int_{\R^d} \left|  \nabla\left(w_\eps^{\frac{p+(\frac{d-2}{d+2})}{2}} \right) \right|^2 \, dx\\
& \qquad +  \frac{p-1}{p}  \int_{\R^d} u_\eps  w_\eps^p \, dx.
\end{split}
\end{equation}
Combining together
\begin{equation}\label{b10}
\begin{split}
&\frac{1}{p} \left(  \frac{d}{dt}\left\{  \int_{\R^d} u_\eps^p \, dx +   \int_{\R^d} w_\eps^p \, dx  \right\} \right) \\
&+ 4\alpha_1 \frac{d-2}{d+2}(p-1) \frac{1}{p+\frac{d-2}{d+2}}\int_{\R^d} \left|  \nabla\left(u_\eps^{\frac{p+(\frac{d-2}{d+2})}{2}} \right) \right|^2 \, dx \\
&+ 4\alpha_2 \frac{d-2}{d+2}(p-1) \frac{1}{p+\frac{d-2}{d+2}}\int_{\R^d} \left|  \nabla\left(w_\eps^{\frac{p+(\frac{d-2}{d+2})}{2}} \right) \right|^2 \, dx \\
&\leq   \frac{p-1}{p}  \left \{\int_{\R^d} u_\eps^p w_\eps \, dx +  \int_{\R^d} u_\eps  w_\eps^p \, dx \right\}.
\end{split}
\end{equation}
Now,
\begin{equation}\label{b11}
\int_{\R^d} u_\eps^p w_\eps \, dx \leq \left(  \int_{\R^d} u_\eps^{pr_1} \, dx  \right)^{\frac{1}{r_1}} \left( \int_{\R^d} w_\eps^{r_1'} \, dx \right)^{\frac{1}{r_1'}}
\end{equation}
and
\begin{equation}\label{b12}
\int_{\R^d} u_\eps w_\eps^p \, dx \leq \left(  \int_{\R^d} u_\eps^{r_2} \, dx  \right)^{\frac{1}{r_2}} \left( \int_{\R^d} w_\eps^{pr_2'} \, dx \right)^{\frac{1}{r_2'}}
\end{equation}
by H\"older inequality with $r_1, r_2 > 1, r_1' = \frac{r_1}{r_1-1}$ and $r_2' = \frac{r_2}{r_2-1}$. We have
\begin{equation}\label{b13}
\frac{2d}{d+2} < pr_1 < \frac{\left( p+\frac{d-2}{d+2}\right)d}{d-2}
\end{equation}
and
\begin{equation}\label{b14}
\frac{1}{r_1} > \max \left\{  \frac{d-2}{2d}, \frac{d-2}{d}\frac{p}{p+\frac{d-2}{d+2}}  \right\}.
\end{equation}
We now recall a variant of Gagliardo-Nirenberg inequality (see Lemma 6 in \cite{45})
\begin{equation}\label{b15}
||\Phi||_{k_2} \leq C^{2/{r+m-1}} ||\Phi||_{k_1}^{1-\sigma} ||\nabla \Phi^{\frac{r+m-1}{2}}||_2^{\frac{2\sigma}{r+m-1}}
\end{equation}
with $m \geq 1,  \,  k_1 \in [1, r+m-1]$ and $1 \leq k_1 \leq k_2  \leq \frac{(r+m-1)d}{d-2}$ with $d \geq 3$ and $\sigma = \frac{r+m-1}{2}\left( \frac{1}{k_1} - \frac{1}{k_2}\right)\left(  \frac{1}{d} -\frac{1}{2} + \frac{r+m-1}{2k_1}  \right)^{-1}$. We pick $r=p,  m  = \frac{2d}{d+2},  k_1 = \frac{2d}{d+2},  k_2 = p r_1$ and use the above inequalities to deduce
\begin{equation}\label{b16}
||u_\eps||_{pr_1}^p   \leq C ||u_\eps||_{\frac{2d}{d+2}}^{p(1-\sigma)} ||\nabla u_\eps^{\frac{p+\frac{d-2}{d+2}}{2}}||_2^{p\frac{2\sigma}{p+\frac{d-2}{d+2}}},
\end{equation}
with
\begin{equation*}
\sigma = \frac{p+\frac{d-2}{d+2}}{2} \frac{\frac{d+2}{2d}- \frac{1}{pr_1}}{\frac{1}{d}-\frac{1}{2} + \frac{p+\frac{d-2}{d+2}}{\frac{4d}{d+2}}} \in (0, 1).
\end{equation*}
Using the bound  $||u_\eps(t)||_{\frac{2d}{d+2}} \leq C$ for  $t \in (0, T)$  we deduce that
\begin{equation}\label{b17}
\left( \int_{\R^d}  u_\eps^{pr_1} \, dx  \right)^{1/{r_1}}    \leq C || \nabla u_\eps^{\frac{p+\frac{d-2}{d+2}}{2}}||_2^{\frac{\frac{p(d+2)}{2d} - \frac{1}{r_1}  }{\frac{1}{d} - \frac{1}{2} + \frac{p+\frac{d-2}{d+2}}{\frac{4d}{d+2}}}  }.
\end{equation}
Likewise, using
\begin{equation*}
\frac{2d}{d+2} < r_1' < \frac{(p+\frac{d-2}{d+2})d}{d-2}
\end{equation*}
and Gagliardo-Nirenberg inequality with $||w_\eps (t)||_{\frac{2d}{d+2}} \leq C$
\begin{equation}
||w_\eps ||_{r_1'} \leq C || \nabla w_\eps^{\frac{p+\frac{d-2}{d+2}}{2}}||_2^{\frac{\frac{d+2}{2d}-\frac{1}{r_1'}}{\frac{1}{d}-\frac{1}{2}+ \frac{p+\frac{d-2}{d+2}}{4d/{d+2}}}}.
\end{equation}
Then

\begin{equation}\label{b18}
\begin{split}
&\left( \int_{\R^d} u_\eps^{pr_1} \, dx\right)^{1/{r_1}}   \left( \int_{\R^d}  w_\eps^{r_1'} \, dx \right)^{1/{r_1'}} \\
& \leq C ||\nabla u_\eps^{\frac{p+\frac{d-2}{d+2}}{2}}||_2^{\frac{p(\frac{d+2}{2d})-\frac{1}{r_1}}{\frac{1}{d}-\frac{1}{2} + \frac{p+\frac{d-2}{d+2}}{4d/{d+2}}}}   ||\nabla w_\eps^{\frac{p+\frac{d-2}{d+2}}{2}}||_2^{\frac{p(\frac{d+2}{2d})-1+\frac{1}{r_1}}{\frac{1}{d}-\frac{1}{2} + \frac{p+\frac{d-2}{d+2}}{4d/{d+2}}}}.
\end{split}
\end{equation}

We have
$$ \frac{2d}{d+2}  < r_2  < \frac{(p+\frac{d-2}{d+2})d}{d-2}.$$
Then by Gagliardo-Nirenberg inequality
\begin{equation}\label{b19}
\left( \int_{\R^d} u_\eps^{r_2} \, dx \right)^{1/{r_2}} \leq C ||\nabla u_\eps^{\frac{p+\frac{d-2}{d+2}}{2}}||_2^{\frac{\frac{d+2}{2d}-\frac{1}{r_2}}{\frac{1}{d}-\frac{1}{2}+ \frac{p+\frac{d-2}{d+2}}{4d/{d+2}}}}
\end{equation}
as $||u_\eps(t)||_{\frac{2d}{d+2}} \leq C$ and $||w_\eps(t)||_{\frac{2d}{d+2}} \leq C$.
Now,
$$ \frac{2d}{d+2} < p r_2' < \frac{(p+\frac{d-2}{d+2})d}{d-2}.$$
One has
\begin{equation}\label{b20}
\begin{split}
||w_\eps||_{pr_2'}^p &\leq C ||w_\eps||_{\frac{2d}{d+2}}^{p(1-\sigma)}||\nabla w_\eps^{\frac{p+\frac{d-2}{d+2}}{2}}||_2^{p\frac{2\sigma}{p+\frac{d-2}{d+2}}}\\
& \leq C \left |\left|\nabla w_\eps^{\frac{p+\frac{d-2}{d+2}}{2}}\right|\right|_2^{\frac{\left(\frac{p(d+2)}{2d}-\frac{1}{r_2'}\right)}{\left(\frac{1}{d}-\frac{1}{2}+\frac{p+\frac{d-2}{d+2}}{4d/{d+2}}\right)}}
\end{split}
\end{equation}
with
$$\sigma = \frac{(d+2)\frac{p+\frac{d-2}{d+2}}{4d} -\frac{1}{pr_2'}}{\frac{1}{d} - \frac{1}{2}+(d+2)\frac{ p+\frac{d-2}{d+2}}{4d}}.$$
Then,
\begin{equation}\label{b21}
\begin{split}
\left( \int_{\R^d} u_\eps^{r_2} \, dx \right)^{1/{r_2}} \left( \int_{\R^d} w_\eps^{pr_2'} \, dx \right)^{1/{r_2'}} &\leq  C \left|\left| \nabla u_\eps^{p+\frac{d-2}{d+2}} \right|\right|_2^{\frac{\frac{d+2}{2d}-\frac{1}{r_2}}{\frac{1}{d}-\frac{1}{2} + (d+2)\frac{p+\frac{d-2}{d+2}}{4d} }}\\
& \quad \times \left|\left| \nabla w_\eps^{p+\frac{d-2}{d+2}} \right|\right|_2^{\frac{\frac{d+2}{2d}-\frac{1}{r_2}-1}{\frac{1}{d}-\frac{1}{2} + (d+2)\frac{p+\frac{d-2}{d+2}}{4d} }}
\end{split}
\end{equation}
Combining, one gets
\begin{equation}\label{b22}
\begin{split}
\frac{1}{p}\frac{d}{dt}\left( \int_{\R^d} u_\eps^p + w_\eps^p \, dx \right)  + \delta_1(d, \alpha_1)\int_{\R^d} |\nabla u_\eps^{\frac{p+\frac{d-2}{d+2}}{2}}|^2 \, dx \\
 + \delta_2(d, \alpha_2)\int_{\R^d} |\nabla w_\eps^{\frac{p+\frac{d-2}{d+2}}{2}}|^2 \, dx \\
 \leq (\frac{p-1}{p}) \left( \int_{\R^d} u_\eps^{pr_1} \, dx \right)^{1/{r_1}} \left( \int_{\R^d} w_\eps^{r_1'} \, dx \right)^{1/{r_1'}}\\
 + (\frac{p-1}{p}) \left( \int_{\R^d} u_\eps^{r_2} \, dx \right)^{1/{r_2}} \left( \int_{\R^d} w_\eps^{qr_2'} \, dx \right)^{1/{r_2'}}\\
\leq C ||\nabla u_\eps^{\frac{p+\frac{d-2}{d+2}}{2}}||_2^{\frac{\frac{p(d+2)}{2d}-\frac{1}{r_1}}{\frac{1}{d}-\frac{1}{2}}}
||\nabla w_\eps^{\frac{p+\frac{d-2}{d+2}}{2}}||_2^{\frac{\frac{2-d}{2d}+ \frac{1}{r_1}}{\frac{1}{d} - \frac{1}{2}  + \frac{(d+2)(p+\frac{d-2}{d+2})}{4d}}}
\\
+  C || \nabla u_\eps^{\frac{p+\frac{d-2}{d+2}}{2}}||_2^{\frac{\frac{d+2}{2d} - \frac{1}{r_2}}{\frac{1}{d}-\frac{1}{2}+ \frac{(d+2)(p+\frac{d-2}{d+2})}{4d} }}
  || \nabla w_\eps^{\frac{p+\frac{d-2}{d+2}}{2}}||_2^{\frac{\frac{p(d+2)}{2d}-1+\frac{1}{r_2}}{\frac{1}{d} - \frac{1}{2} + \frac{(d+2)(p+\frac{d-2}{d+2})}{4d} }},
\end{split}
\end{equation}
where $\delta_1, \delta_2 > 0$ and can be calculated explicitly.\\

We now deal with the boundedness of $u_\eps$ and $w_\eps$  in $L^p-$space: Let $\gamma_1 > 0 , \gamma_2 > 0$ be such that $\gamma_1 + \gamma_2 < 2$. For $\eps > 0$ by Young's inequality we have
$$ \alpha^{\gamma_1}\beta^{\gamma_2} \le \eps(\alpha^2+\beta^2) + C.$$
From the above calculations, there exists some $p > \bar{p}$ with some $\bar{p}>1$ such that
$$ \frac{\frac{p(d+2)}{2d} - \frac{1}{r_1}}{\frac{1}{d} - \frac{1}{2} + \frac{p+\frac{2d}{d+2} -1}{4d/(d+2)}}  +  \frac{\frac{d+2}{2d} -1 + \frac{1}{r_1}}{\frac{1}{d} - \frac{1}{2} + \frac{p+ \frac{2d}{d+2} -1}{4d/(d+2)}}  < 2.$$
$$ \frac{\frac{d+2}{2d} - \frac{1}{r_2}}{\frac{1}{d} - \frac{1}{2} + \frac{p+\frac{2d}{d+2}-1}{4d/(d+2)}}  +  \frac{\frac{(d+2)p}{2d} -1 + \frac{1}{r_2} }{ \frac{1}{d}  - \frac{1}{2}  + \frac{ p+ \frac{2d}{d+2} -1  }{4d/(d+2)} }  < 2.   $$
The above two inequalities can be simplified further(left to the reader). Now,
\begin{equation}\label{b23}
 \begin{split}
&\frac{1}{p} \left( \int_{\R^d} u_\eps^p \, dx +  \int_{\R^d} w_\eps^p \, dx \right) + \delta_1(d, \alpha_1) \int_{\R^d} \left|  \nabla u_{\eps}^{\frac{p+\frac{d-2}{d+2}}{2}}  \right|^2 \, dx \\
&\qquad + \delta_2(d, \alpha_2)\int_{\R^d} \left|  \nabla w_{\eps}^{\frac{p+\frac{d-2}{d+2}}{2}}  \right|^2 \, dx \leq C,
\end{split}
\end{equation}
with $\delta_1, \delta_2 > 0$. Using Gagliardo-Nirenberg inequality with $||u||_1 = M_1$ and $||w||_1 = M_2$ and Young's inequality
\begin{equation}\label{b24}
\begin{split}
\frac{1}{p} ||u_\eps||_p^p &\leq  C ||\nabla u_\eps^{\frac{p+\frac{d-2}{d+2}}{2}}||_2^{\frac{p-1}{\frac{1}{d} - \frac{1}{2} + \frac{p+ \frac{d-2}{d+2}}{2} }}\\
& \quad \leq \frac{\frac{4d}{d+2}(p-1)}{(p+\frac{d-2}{d+2})^2} \int_{\R^d} |\nabla u_\eps^{\frac{p+\frac{d-2}{d+2}}{2}}|^2 \, dx + C
\end{split}
\end{equation}
and
\begin{equation}\label{b25}
\frac{1}{p} \int_{\R^d} w_\eps^p \, dx \leq   \frac{\frac{4d}{d+2}(p-1)}{(p+\frac{d-2}{d+2})^2} \int_{\R^d} |\nabla w_\eps^{\frac{p+\frac{d-2}{d+2}}{2}}|^2 \, dx + C,
\end{equation}
by
$$ \frac{p-1}{\frac{1}{d} - \frac{1}{2} + \frac{p+\frac{d-2}{d+2}}{2}} < 2.$$
Writing $y(t) := \frac{1}{p} \left( \int_{\R^d} (u_\eps^p + w_\eps^p) \ dx \right) $ we get
$$ y'(t)+ y(t)  \leq C \quad \text{for} \quad t \in (0, T).$$
Then,
$$ ||u_\eps(t)||_p, \, \,  ||w_\eps(t)||_p \leq C \quad \text{for} \quad  r \in [1,\infty)\quad \text{and}\quad t \in (0, T).$$
From this we deduce that
\begin{equation}\label{b26}
\left|\left|(u_\eps(t), w_\eps(t))\right|\right|_r \leq C, \quad \text{for} \quad r \in [1, \infty) \quad \text{and}\quad t \in (0, T).
\end{equation}
We now try to improve the regularities of $v$ and $z$. Since
$$v_\eps(x, t) = \mathcal{K} \ast w_\eps = c_d \iint_{\R^d \times \R^d} \frac{w_\eps(y,t)}{|x-y|^{d-2}} \, dy, \quad  z_\eps(x, t)  = c_d \iint_{\R^d \times \R^d}  \frac{u_\eps(y,t)}{|x-y|^{d-2}} \, dy,$$
by HLS inequality we have
\begin{equation}\label{b27}
\begin{split}
||\nabla v_\eps||_r &\leq c_d(d-2)||I_1(w_\eps)||_r \leq C ||w_\eps||_{\frac{dr}{d+r}}\\
||\nabla z_\eps||_r &\leq  C ||u_\eps||_{\frac{dr}{d+r}}.
\end{split}
\end{equation}
On the other hand, Calderon-Zygmud inequality implies that
\begin{equation}\label{b28}
\begin{split}
||\partial_{x_i}\partial_{x_j} v_\eps||_r &\leq C ||w_\eps||_r \\
||\partial_{x_i}\partial_{x_j} z_\eps||_r &\leq C ||u_\eps||_r, \,\, 1 \leq i,j \leq d.
\end{split}
\end{equation}
The above estimates and the Morrey's inequality imply that
\begin{equation}\label{b29}
||(v_\eps(t), z_\eps(t))||_r + ||(\nabla v_\eps(t), \nabla z_\eps(t))||_r \leq C, \quad \text{for} \quad r \in [1, \infty] \,\,\, \text{and} \,\,\, t \in (0, T).
\end{equation}

\end{proof}

\begin{lemma}\label{lem:2.4}
Under the assumptions of Lemma \ref{lemma2.3}, there exists $C >0$ independent of $\eps$ such that the strong solution of \eqref{b1} satisfies
\begin{equation}\label{b30}
||(u_\eps(t), w_\eps(t))||_{\infty} \leq C, \quad \forall t \in (0, T).
\end{equation}
Moreover, there exists a global weak solution $(u, w)$ of \eqref{a1}-\eqref{a2} which  also satisfies a uniform bound.
\end{lemma}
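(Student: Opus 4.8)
The plan is to establish the uniform bound \eqref{b30} by a Moser--Alikakos iteration built on Lemma~\ref{lemma2.3}, and then to produce the limit solution by an Aubin--Lions compactness argument together with the $\eps$-uniform a priori bounds. What the iteration uses from Lemma~\ref{lemma2.3} is: the uniform bounds $\|u_\eps(t)\|_r+\|w_\eps(t)\|_r\le C$ for every finite $r$ on $(0,T)$, conservation of mass $\|u_\eps(t)\|_1=M_1$ and $\|w_\eps(t)\|_1=M_2$, and, decisively, the uniform bound $\|\nabla v_\eps(t)\|_\infty+\|\nabla z_\eps(t)\|_\infty\le C$ from \eqref{b29}. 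Since the drift velocities $\nabla v_\eps,\nabla z_\eps$ lie in $L^\infty$ uniformly in $\eps$, the cross terms in the two equations will be absorbable, at every stage of the iteration, into the dissipation of the degenerate diffusion; in effect the argument is the single-population one of \cite{43, ulusoy, carlen-ulusoy} run in parallel for $u_\eps$ and $w_\eps$, the coupling being inert because it enters only through the already-controlled potentials.

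Concretely, fix $t\in(0,T)$, write $m=\tfrac{2d}{d+2}\in(1,2)$, and for $p\ge 2$ test the first equation in \eqref{b1} with $u_\eps^{p-1}$. As in \eqref{b7}, the diffusion term contributes $-4\alpha_1\tfrac{d-2}{d+2}\,\tfrac{p-1}{p+m-1}\,\|u_\eps\|_m^{4/(d+2)}\,\|\nabla u_\eps^{(p+m-1)/2}\|_2^2$, whose prefactor is bounded above and below uniformly in $p\ge 2$ and $\eps$; the drift term is rewritten as $\tfrac{2(p-1)}{p+m-1}\int u_\eps^{(p-m+1)/2}\,\nabla u_\eps^{(p+m-1)/2}\!\cdot\nabla v_\eps\,dx$, bounded by Cauchy--Schwarz using $\|\nabla v_\eps\|_\infty\le C$, and then a small multiple of $\|\nabla u_\eps^{(p+m-1)/2}\|_2^2$ is absorbed by Young's inequality. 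This yields, for every $p$, a differential inequality in which $c\,\|\nabla u_\eps^{(p+m-1)/2}\|_2^2$ dominates a drift contribution governed by lower $L^q$ norms of $u_\eps$. Applying the Sobolev inequality $\|g\|_{2d/(d-2)}^2\le C\|\nabla g\|_2^2$ to $g=u_\eps^{(p+m-1)/2}$ turns the dissipation into a genuine gain of integrability, and interpolating the resulting higher norm against the conserved mass, then iterating over $p=p_k\uparrow\infty$, produces a recursion for $a_k:=\sup_{(0,T)}\|u_\eps(t)\|_{p_k}$ whose right-hand side remains bounded as $k\to\infty$; hence $\|u_\eps(t)\|_\infty\le C$, and symmetrically $\|w_\eps(t)\|_\infty\le C$, uniformly in $\eps$ and $t$. \emph{The iteration is the crux of the proof}: one must verify that the constants entering the recursion --- those from the variant Gagliardo--Nirenberg inequality \eqref{b15} and from Young's inequality --- do not grow fast enough with $p$ to overwhelm the gain from Sobolev's inequality. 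This works precisely because the coercive quantity produced by the test is $\|\nabla u_\eps^{(p+m-1)/2}\|_2^2$, with the porous-medium shift $m-1>0$ already incorporated, so that the relevant interpolation exponents converge to admissible limits as $p\to\infty$; the bookkeeping is identical to the single-species references, the only genuinely new point being that the cross-diffusion drifts stay controlled uniformly in $\eps$ thanks to Lemma~\ref{lemma2.3}.

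Once \eqref{b30} holds, re-running the $L^p$ computation with the $L^\infty$ bound now in hand (so that the drift and reaction contributions are trivially dominated) yields, integrated in time, $\nabla u_\eps,\nabla w_\eps\in L^2((0,T)\times\R^d)$ and more generally $u_\eps^{\theta},w_\eps^{\theta}\in L^2(0,T;H^1(\R^d))$ for the exponents $\theta=\tfrac{2d}{d+2},\ \tfrac{3d-2}{2(d+2)}$ demanded by Definitions~\ref{def:weaksol}--\ref{def:free-energy-sol}, all uniformly in $\eps$; the second moments are propagated as in the single-species case, and \eqref{b27}--\eqref{b29} give $v_\eps,z_\eps$ bounded in $L^\infty(0,T;W^{1,\infty}\cap W^{2,r})$. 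The equation then bounds $\partial_t u_\eps,\partial_t w_\eps$ in $L^2(0,T;H^{-1}_{\mathrm{loc}}(\R^d))$, so Aubin--Lions extracts a subsequence with $u_\eps\to u$, $w_\eps\to w$ strongly in $L^2_{\mathrm{loc}}$ and a.e., $u_\eps^{m}\rightharpoonup u^{m}$ weakly in $L^2(0,T;H^1)$, and $\nabla v_\eps\to\nabla v=\mathcal{K}\ast\nabla w$, $\nabla z_\eps\to\nabla z=\mathcal{K}\ast\nabla u$ in $L^p_{\mathrm{loc}}$; passing to the limit in \eqref{a12} is then routine, the regularization being harmless since $(u_\eps+\eps)^{m}\to u^{m}$ by the uniform $L^\infty$ bound and dominated convergence and $u_\eps\nabla v_\eps\to u\nabla v$ against test functions. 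The limit $(u,w)$ is a weak solution, inherits the bound \eqref{b30} by weak-$*$ lower semicontinuity, and belongs to $C([0,T);L^1)$ by the time-equicontinuity furnished by the $\partial_t$ estimate. Finally, in case (i) of Theorem~\ref{thm:existence} the hypothesis $\|u_\eps(t)\|_{2d/(d+2)},\|w_\eps(t)\|_{2d/(d+2)}\le C$ of Lemma~\ref{lemma2.3} is available on $(0,T)$ for every $T$ (it comes from the monotone decay of $E_\alpha$ under \eqref{a9c}), so the blow-up alternative of Lemma~\ref{lemma2.1} together with \eqref{b30} forces $T^\eps_{max}=\infty$ and the constructed solution is global.
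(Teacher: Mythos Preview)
Your proposal is correct and follows the same route as the paper: a Moser--Alikakos iteration (the paper simply invokes Proposition~10 of \cite{45}) to upgrade the $L^r$ bounds of Lemma~\ref{lemma2.3} to the uniform $L^\infty$ estimate \eqref{b30}, followed by compactness (the paper cites \cite[Section~4]{44}) to extract a subsequential limit and pass in the weak formulation; your list of convergences matches the paper's display \eqref{eqn:converge}. Your sketch actually supplies the mechanics---the handling of the drift via the uniform bound on $\|\nabla v_\eps\|_\infty$ from \eqref{b29}, the Aubin--Lions step, and the identification of the nonlinear limits---that the paper's terse proof leaves to the references.
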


\begin{proof}
Using Lemma \ref{lemma2.3} one can apply the Moser's iteration to obtain a priori  estimate of solution in $L^\infty.$ Then this solution can be extended globally in time from the extensibility criterion in Lemma \ref{lemma2.1} establishing \eqref{b30}, one can refer to Proposition 10 of \cite{45}. From \eqref{b30} there is $(u,v,w,z)$ with regularities given in Definition \ref{def:weaksol} such that, up to a subsequence, $\eps \to 0,$
\begin{equation}\label{eqn:converge}
\begin{split}
u_{\eps_n}  &\to u \, \, \text{strongly} \, \, \text{in}  \, \, C([0, T); L_{loc}^p(\R^d)) \, \, \text{and a.e. in} \, \, \R^d \times (0, T),\\
\nabla u_{\eps_n}^{2d/(d+2)} &\rightharpoonup \nabla u^{2d/(d+2)} \, \, \text{weakly}-* \, \, \text{in} \, \, L^{\infty}((0,T); L^2(\R^d)),\\
w_{\eps_n} &\to w \, \, \text{strongly} \, \, \text{in}  \, \, C([0, T); L_{loc}^p(\R^d)) \, \, \text{and a.e. in} \, \, \R^d \times (0, T),\\
\nabla w_{\eps_n}^{2d/(d+2)} &\rightharpoonup \nabla w^{2d/(d+2)} \, \, \text{weakly}-* \, \, \text{in} \, \, L^{\infty}((0,T); L^2(\R^d)),\\
v_{\eps_n}(t) &\to   v(t)  \, \, \text{strongly  in} \, \, L_{loc}^r \, \, \text{and a.e. in} \, \, (0, T),\\
\nabla v_{\eps_n}(t) &\to \nabla  v(t) \, \, \text{strongly in} \, \, L_{loc}^r(\R^d) \, \, \text{and a.e. in} (0, T),\\
\Delta v_{\eps_n}(t) &\rightharpoonup \Delta v(t) \, \, \text{weakly in} \, \, L_{loc}^r(\R^d) \, \, \text{and a.e. in} (0, T),\\
z_{\eps_n}(t) &\to   z(t)  \, \, \text{strongly  in} \, \, L_{loc}^r \, \, \text{and a.e. in} \, \, (0, T),\\
\nabla z_{\eps_n}(t) &\to \nabla z(t) \, \, \text{strongly in} \, \, L_{loc}^r(\R^d) \, \, \text{and a.e. in} (0, T),\\
\Delta z_{\eps_n}(t) &\rightharpoonup \Delta z(t) \, \, \text{weakly in} \, \, L_{loc}^r(\R^d) \, \, \text{and a.e. in} (0, T),\\
\end{split}
\end{equation}
where $p \in (1, \infty), $ $r \in (1, \infty]$ and $T \in (0, \infty).$ Since the above convergence can be established following \cite[Section 4]{44}, and the details are left to the reader. Therefore, we have a global weak solution $(u, v, w, z)$ over $\R^d \times (0, T)$ with $T > 0.$

\end{proof}

We now follow \cite{43} to establish that a global weak solution is also a global free energy solution.

\begin{lemma}\label{lem:2.5}
Consider a global weak solution in Lemma \ref{lem:2.4}. Then it is also a global free energy solution $(u, w)$ of \eqref{a1} given in Definition \ref{def:free-energy-sol}.

\end{lemma}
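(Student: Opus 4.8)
\emph{Strategy.} The plan is to work entirely with the regularised system \eqref{b1} and pass to the limit $\eps\to0$ along the subsequence of Lemma \ref{lem:2.4}, using the compactness collected in \eqref{eqn:converge}. Since the weak-solution properties (i)--(ii) of Definition \ref{def:weaksol} are already granted by Lemma \ref{lem:2.4}, only the extra regularity \eqref{a13} and the free-energy inequality \eqref{a14} remain. For \eqref{a13} I would rerun the $L^p$ computation \eqref{b6}--\eqref{b10} from the proof of Lemma \ref{lemma2.3} with the single exponent $p=\frac{2d}{d+2}$, which is $>1$ precisely because $d\ge3$; for this $p$ one has $\frac{p+\frac{d-2}{d+2}}{2}=\frac{3d-2}{2(d+2)}$. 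Since Lemma \ref{lem:2.4} supplies a uniform $L^\infty$ bound, the right-hand side of \eqref{b10} is $\le C\int_{\R^d}(u_\eps^p+w_\eps^p)\dx\le C$ (by $L^1\cap L^\infty$), so no Gagliardo--Nirenberg exponent restriction is needed; integrating \eqref{b10} on $(0,T)$ and retaining in \eqref{b7} the inequality $(u_\eps+\eps)^{\frac{2d}{d+2}-1}\ge u_\eps^{\frac{2d}{d+2}-1}$ gives $\sup_{0<\eps\le1}\int_0^T\!\int_{\R^d}\big(|\nabla u_\eps^{\frac{3d-2}{2(d+2)}}|^2+|\nabla w_\eps^{\frac{3d-2}{2(d+2)}}|^2\big)\dxdt<\infty$. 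With the $L^\infty\cap L^1$ bounds this makes $u_\eps^{\frac{3d-2}{2(d+2)}},w_\eps^{\frac{3d-2}{2(d+2)}}$ bounded in $L^2(0,T;H^1(\R^d))$; the a.e.\ convergence in \eqref{eqn:converge} then identifies the weak $L^2$-limits of the gradients as $\nabla u^{\frac{3d-2}{2(d+2)}},\nabla w^{\frac{3d-2}{2(d+2)}}$, i.e.\ \eqref{a13}.

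\emph{Approximate dissipation.} With $p=\frac{2d}{d+2}$ the first equation of \eqref{b1} can be written $\partial_t u_\eps=\nabla\cdot(u_\eps\nabla P_1^\eps)+\eps\,\alpha_1 2\|u_\eps\|_p^{4/(d+2)}\Delta(u_\eps+\eps)^{\frac{d-2}{d+2}}$ with $P_1^\eps:=\alpha_1 2\|u_\eps\|_p^{4/(d+2)}(u_\eps+\eps)^{\frac{d-2}{d+2}}-v_\eps$; note $\nabla P_1^\eps\in L^2_{\mathrm{loc}}$ thanks to the $\eps$-shift (unlike the $\eps=0$ potential, whose gradient is singular on $\{u_\eps=0\}$). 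Likewise for the third equation, with $P_2^\eps:=\alpha_2 2\|w_\eps\|_p^{4/(d+2)}(w_\eps+\eps)^{\frac{d-2}{d+2}}-z_\eps$. Testing these equations by $P_1^\eps$ and $P_2^\eps$ and integrating on $\R^d\times(0,t)$ — legitimate by the $W^{2,1}_p$ regularity of Lemma \ref{lemma2.1}, the decay from the finite second moment, and the bounds of Lemma \ref{lemma2.3} for $v_\eps,z_\eps$ — using $\int_{\R^d}\partial_t u_\eps\,v_\eps+\int_{\R^d}\partial_t w_\eps\,z_\eps=c_d\frac{d}{dt}H[u_\eps,w_\eps]$ (symmetry of the Riesz kernel) and $\int_{\R^d}(u_\eps+\eps)^{\frac{d-2}{d+2}}\partial_t u_\eps=\frac{d+2}{2d}\frac{d}{dt}\int_{\R^d}G_\eps(u_\eps)$, where $G_\eps(s):=(s+\eps)^{\frac{2d}{d+2}}-\eps^{\frac{2d}{d+2}}-\frac{2d}{d+2}\eps^{\frac{d-2}{d+2}}s$ satisfies $0\le G_\eps(s)\le\frac{2d}{d+2}s^{\frac{2d}{d+2}}$ and $G_\eps(s)\to s^{\frac{2d}{d+2}}$ (the subtracted constant removes the divergence of $\int(u_\eps+\eps)^{\frac{2d}{d+2}}$ and mass conservation kills the linear term), one obtains, after one time-integration by parts,
\[
E_\alpha[u_\eps(t),w_\eps(t)]+\int_0^t\!\!\int_{\R^d}u_\eps|\nabla P_1^\eps|^2\dxdt+\int_0^t\!\!\int_{\R^d}w_\eps|\nabla P_2^\eps|^2\dxdt\le E_\alpha[u_0^\eps,w_0^\eps]+o_\eps(1).
\]
The $o_\eps(1)$ collects the explicit $\eps$-factor and the mismatch between $\int G_\eps(u_\eps)$ and $\|u_\eps\|_p^p$; it is absorbed with the elementary bounds $0\le(u_\eps+\eps)^{\frac{d-2}{d+2}}-u_\eps^{\frac{d-2}{d+2}}\le\eps^{\frac{d-2}{d+2}}$, the ones just recorded for $G_\eps$, mass conservation, and the $L^\infty$, $L^1$ and Lemma \ref{lemma2.3} bounds.

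\emph{Passing to the limit.} A uniform second-moment bound $\sup_{0<\eps\le1}\int_{\R^d}|x|^2(u_\eps+w_\eps)(x,t)\dx\le C(T)$ follows from a standard Gr\"onwall estimate and \eqref{a3}--\eqref{a4}; together with the a.e.\ and local-$L^p$ convergence in \eqref{eqn:converge} it upgrades to $u_\eps\to u$, $w_\eps\to w$ in $C([0,T];L^{\frac{2d}{d+2}}(\R^d))$. Hence $\|u_\eps(t)\|_{\frac{2d}{d+2}}\to\|u(t)\|_{\frac{2d}{d+2}}$, and by \eqref{b3} $H[u_\eps(t),w_\eps(t)]\to H[u(t),w(t)]$, so $E_\alpha[u_0^\eps,w_0^\eps]\to E_\alpha[u_0,w_0]$ and $E_\alpha[u_\eps(t),w_\eps(t)]\to E_\alpha[u(t),w(t)]$. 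For the dissipation I would use the algebraic identity
\[
\sqrt{u_\eps}\,\nabla(u_\eps+\eps)^{\frac{d-2}{d+2}}=\frac{2(d-2)}{3d-2}\,\frac{\sqrt{u_\eps}}{\sqrt{u_\eps+\eps}}\,\nabla(u_\eps+\eps)^{\frac{3d-2}{2(d+2)}},
\]
so that, by the regularity just proved, the a.e.\ convergence $\frac{\sqrt{u_\eps}}{\sqrt{u_\eps+\eps}}\to\caract_{\{u>0\}}$, and $\nabla u^{\frac{3d-2}{2(d+2)}}=0$ a.e.\ on $\{u=0\}$, the diffusion part of $\sqrt{u_\eps}\,\nabla P_1^\eps$ converges weakly in $L^2(\R^d\times(0,t))$ to $\sqrt{u}\,\nabla\big(\alpha_1 2\|u\|_{\frac{2d}{d+2}}^{4/(d+2)}u^{\frac{d-2}{d+2}}\big)$, while the drift part $\sqrt{u_\eps}\,\nabla v_\eps\to\sqrt{u}\,\nabla v$ strongly in $L^2(\R^d\times(0,t))$ (by \eqref{eqn:converge}, Lemma \ref{lemma2.3} and the second-moment bound); the same for $w_\eps$. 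Weak lower semicontinuity of the $L^2$-norm then gives $\int_0^t\!\int_{\R^d}u\,|\nabla(\alpha_1 2\|u\|_{\frac{2d}{d+2}}^{4/(d+2)}u^{\frac{d-2}{d+2}})-\nabla v|^2\le\liminf_\eps\int_0^t\!\int_{\R^d}u_\eps|\nabla P_1^\eps|^2$, and similarly for $w$. Passing to the limit in the displayed inequality yields \eqref{a14} for a.e.\ $t\in(0,T)$; since $t\mapsto E_\alpha[u(t),w(t)]$ is continuous (from $C([0,T];L^{\frac{2d}{d+2}})$) and the dissipation integrals are nondecreasing in $t$, \eqref{a14} holds for every $t\in(0,T)$, so $(u,w)$ is a global free energy solution.

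\emph{Main obstacle.} The delicate point is the limit in the dissipation term: one must identify the weak $L^2$-limit of the degenerate-mobility flux $\sqrt{u_\eps}\,\nabla P_1^\eps$ in the presence of the $\eps$-regularised fast-diffusion nonlinearity — exactly where the new regularity \eqref{a13} and the identity above enter — and one must ensure that the positive-definite part $\alpha_1\|u\|_{2d/(d+2)}^2+\alpha_2\|w\|_{2d/(d+2)}^2$ of $E_\alpha$ genuinely \emph{converges} rather than merely being lower-semicontinuous, which is what forces the uniform-second-moment/tightness step and the HLS continuity of $H$. By contrast, the $\eps$-remainder terms coming from the discrepancy between the regularisation $(u_\eps+\eps)^{2d/(d+2)}$ in \eqref{b1} and the natural energy $\|\cdot\|_{2d/(d+2)}^2$ are routine once the elementary bounds on $G_\eps$ are in hand; this is essentially the scheme of \cite{43}.
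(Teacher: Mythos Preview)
Your proposal is correct and is exactly the approach the paper intends: its own proof consists solely of the remark that one should follow Proposition~2.1 of \cite{BCL} and \cite{43}, with all details omitted. Your sketch carries out precisely that scheme --- the $L^p$-computation at $p=\tfrac{2d}{d+2}$ for \eqref{a13}, a regularised entropy $G_\eps$ yielding an approximate dissipation identity for \eqref{b1}, and passage to the limit via the compactness in \eqref{eqn:converge} together with weak lower semicontinuity of the dissipation.
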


\begin{proof}
The proof can  be done following the proof of Proposition 2.1 in \cite{BCL} and \cite{43}. The details are omitted.
\end{proof}

\begin{theorem}\label{thm:4.1}
Under the assumption  \eqref{a3}-\eqref{a4}  on the  initial data with $||u_0||_1 = M_1,  \, \,   ||w_0||_1=M_2$, there exists a $T_{max} \in (0, \infty]$ and a free energy solution $(u, w)$ over $\R^d \times (0, T_{max})$ of \eqref{a1} such that either $T_{max} = \infty$  or $T_{max} < \infty$ and
\begin{equation}\label{b-31}
\lim_{t \to T_{max}}  \left( ||u(\cdot, t)||_{\infty}  +  ||w(\cdot, t)||_{\infty} \right) = \infty.
\end{equation}
Moreover, let $\alpha_1, \alpha_2$ satisfy \eqref{a9c}, Then, if $T_{max} < \infty$
\begin{equation}\label{b-32}
\lim_{t \to T_{max}} ||u(\cdot, t)||_{\frac{2d}{d+2}} = \infty = \lim_{t \to T_{max}} ||u(\cdot, t)||_{\frac{2d}{d+2}}.
\end{equation}

\end{theorem}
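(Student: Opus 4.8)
The plan is to realize the maximal free energy solution as the $\eps\downarrow0$ limit of the regularized solutions of Lemma~\ref{lemma2.1} and to extract both continuation criteria from the a priori estimates already in hand. Since $u_0,w_0\in L^\infty$, a short-time fixed point argument applied to \eqref{b1} produces, on an interval $[0,\tau]$ whose length depends only on $\|u_0\|_\infty+\|w_0\|_\infty$ and not on $\eps$, approximate solutions with $\|u_\eps(t)\|_\infty+\|w_\eps(t)\|_\infty\le 2(\|u_0\|_\infty+\|w_0\|_\infty)$ there; on such an interval one passes to the limit as in Lemma~\ref{lem:2.4} and promotes the limit to a free energy solution by Lemma~\ref{lem:2.5}. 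I would let $T_{max}$ be the supremum of the times up to which this local construction can be iterated, so that a free energy solution is defined on $(0,T_{max})$. For \eqref{b-31}, suppose $T_{max}<\infty$ but $\liminf_{t\uparrow T_{max}}(\|u(\cdot,t)\|_\infty+\|w(\cdot,t)\|_\infty)<\infty$ and pick $t_n\uparrow T_{max}$ along which this quantity is bounded; since the second moment $\int_{\R^d}|x|^2(u+w)\,dx$ stays finite (its time derivative being controlled by conservation of mass and the bounds of Lemma~\ref{lem:2.4}) and $\nabla u^{2d/(d+2)},\nabla w^{2d/(d+2)}\in L^2(\R^d)$ for a.e.\ $t$ by Definition~\ref{def:weaksol}, the data $(u(\cdot,t_n),w(\cdot,t_n))$ satisfy \eqref{a3}--\eqref{a4} for a suitable choice of $t_n$, and restarting the construction there on an interval of fixed positive length reaches past $T_{max}$ --- a contradiction.

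The substantive part is \eqref{b-32}, where the free energy enters decisively. Assuming \eqref{a9c}, the identity \eqref{a9b} together with the Hardy--Littlewood--Sobolev inequality \eqref{b3} (with $\lambda=2$, $p=q=\tfrac{2d}{d+2}$) makes the second bracket in \eqref{a9b} nonnegative, so the free energy inequality \eqref{a14} yields, for every $t\in(0,T_{max})$,
\begin{equation*}
\Bigl(\sqrt{\alpha_1}\,\|u(\cdot,t)\|_{\frac{2d}{d+2}}-\sqrt{\alpha_2}\,\|w(\cdot,t)\|_{\frac{2d}{d+2}}\Bigr)^{2}\ \le\ E_\alpha[u_0,w_0]<\infty .
\end{equation*}
Hence $\|u(\cdot,t)\|_{2d/(d+2)}$ is bounded as $t\uparrow T_{max}$ precisely when $\|w(\cdot,t)\|_{2d/(d+2)}$ is (using $\alpha_1,\alpha_2>0$), and it suffices to rule out the case in which both stay bounded. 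If they do and $T_{max}<\infty$, I would transfer this bound to the regularized solutions (for small $\eps$, $\|u_\eps(\cdot,t)\|_{2d/(d+2)}$ and $\|w_\eps(\cdot,t)\|_{2d/(d+2)}$ are controlled by the corresponding norms of $u,w$ up to a vanishing error, by the strong $L^p_{loc}$ convergence together with tightness coming from the uniform second-moment bound), so that the hypothesis of Lemma~\ref{lemma2.3} holds with a constant independent of $\eps$ and of $t\in(0,T_{max})$; then Lemma~\ref{lemma2.3} gives uniform $L^r$ bounds, Moser iteration as in Lemma~\ref{lem:2.4} upgrades them to a uniform bound on $\sup_{t<T_{max}}(\|u_\eps(\cdot,t)\|_\infty+\|w_\eps(\cdot,t)\|_\infty)$, and in the limit $\|u(\cdot,t)\|_\infty+\|w(\cdot,t)\|_\infty$ stays bounded up to $T_{max}$, contradicting \eqref{b-31}. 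Therefore one --- and by the displayed inequality both --- of the two $L^{2d/(d+2)}$ norms must diverge as $t\uparrow T_{max}$.

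The hard part, I expect, is the uniformity in $\eps$ of these estimates \emph{up to} $T_{max}$: one must track the constants in Lemmas~\ref{lemma2.3}--\ref{lem:2.5} to be sure they do not degenerate as $t\uparrow T_{max}$ and that an $L^{2d/(d+2)}$ bound for the limit solution genuinely descends to the approximants without circular reasoning, and one must check that the restart data really do satisfy all of \eqref{a3}--\eqref{a4}. A secondary issue is that the scheme above produces the blow-up as a $\limsup$: upgrading to the stated $\lim$ is immediate for \eqref{b-31} from the continuation argument, but for \eqref{b-32} it requires in addition the (least routine) observation that finiteness of $\liminf_{t\uparrow T_{max}}\|u(\cdot,t)\|_{2d/(d+2)}$ along some sequence would still permit a restart and hence a continuation past $T_{max}$.
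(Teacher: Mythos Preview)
Your outline is correct and broadly parallel to the paper's argument: local existence and the $L^\infty$ blow-up alternative \eqref{b-31} are obtained from the regularized scheme exactly as in Lemmas~\ref{lemma2.1}--\ref{lem:2.5} (the paper simply refers to the analogous construction in \cite{43}), and \eqref{b-32} is reduced to the statement that a uniform $L^{2d/(d+2)}$ bound on one component forces the same on the other, after which Lemmas~\ref{lemma2.3}--\ref{lem:2.4} contradict \eqref{b-31}.

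The genuine difference is in how you obtain the comparability of $\|u(\cdot,t)\|_{2d/(d+2)}$ and $\|w(\cdot,t)\|_{2d/(d+2)}$. The paper does not use \eqref{a9b} here; instead it invokes a Young-type splitting of $H[u,w]$ adapted from \cite{C-L} to get an inequality of the form \[\alpha_1\|u\|_{\frac{2d}{d+2}}^2+\alpha_2\|w\|_{\frac{2d}{d+2}}^2\le c_d\eta\,\|u\|_{\frac{2d}{d+2}}^{\frac{2d}{d+2}}+C\eta^{-\frac{d+2}{d-2}}\|w\|_{\frac{2d}{d+2}}^{\frac{2d}{d+2}}+C,\] and then plays with $\eta$ (small, respectively large) to deduce the two one-sided bounds $\|u\|^{2d/(d+2)}\le C\|w\|^{2d/(d+2)}+C$ and its twin. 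Your route---reading off $\bigl(\sqrt{\alpha_1}\|u\|-\sqrt{\alpha_2}\|w\|\bigr)^2\le E_\alpha[u_0,w_0]$ directly from \eqref{a9b}, \eqref{b3} and \eqref{a14}---is shorter and makes the role of \eqref{a9c} completely transparent; it also avoids importing the auxiliary interpolation lemma from \cite{C-L}. The paper's route, on the other hand, does not require \eqref{a9c} to control $E_\alpha[u_0,w_0]$ from below (it only needs the energy to be finite), so it would survive under weaker hypotheses on $(\alpha_1,\alpha_2)$ were that ever needed. Your technical caveats about transferring the $L^{2d/(d+2)}$ bound back to the approximants and about $\lim$ versus $\limsup$ are well placed; the paper does not make these points explicit and simply writes ``Hence \eqref{b-32} holds by \eqref{b-31}, \eqref{b-35} and \eqref{b-36}'', so your discussion is, if anything, more careful than the original on this front.
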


\begin{proof}
For $(u_0, w_0)$ satisfying \eqref{a3}-\eqref{a4} local existence and \eqref{b-31} can be established as in the proof of Theorem 1.1. of \cite{43} by employing approximation arguments.
\begin{equation}\label{b-33}
\alpha_1||u||_{\frac{2d}{d+2}}^2 + \alpha_2 ||w||_{\frac{2d}{d+2}}^2  \leq c_d  H[u, w] + E_{\alpha}[u_0, w_0].
\end{equation}
Modifying Lemma 3.1  of  \cite{C-L} we deduce that

\begin{equation}\label{b-34}
\begin{split}
\alpha_1||u||_{\frac{2d}{d+2}}^2 &+ \alpha_2 ||w||_{\frac{2d}{d+2}}^2  \leq c_d\eta ||u||_{\frac{2d}{d+2}}^{\frac{2d}{d+2}} + c_d C \eta^{-\frac{d+2}{d-2}}M_2^{\frac{4d^2}{(d+2)^2}+ \frac{8d}{(d-2)(d+2)}}||w||_{\frac{2d}{d+2}}^{\frac{4-6d}{(d-2)^2}} \\
& + E_{\alpha}[u_0, w_0]  \leq c_d\eta||u||_{\frac{2d}{d+2}}^{\frac{2d}{d+2}} + c_d C \eta^{-\frac{d+2}{d-2}}||w||_{\frac{2d}{d+2}}^{\frac{2d}{d+2}} + C,
\end{split}
\end{equation}
where we have used \eqref{a9c} in the last inequality to discard $E_{\alpha}[u_0, w_0]$.
Taking $\eta$ small enough, one has

\begin{equation}\label{b-35}
||u(t)||_{\frac{2d}{d+2}}^{\frac{2d}{d+2}} \leq C ||w(t)||_{\frac{2d}{d+2}}^{\frac{2d}{d+2}} + C, \quad \text{for} \, \, t \in (0, T_{max}),
\end{equation}
and if $\eta$ is large enough one observes that
\begin{equation}\label{b-36}
||w(t)||_{\frac{2d}{d+2}}^{\frac{2d}{d+2}} \leq C' ||u(t)||_{\frac{2d}{d+2}}^{\frac{2d}{d+2}}  + C', \quad \text{for} \, \, t \in (0, T_{max}).
\end{equation}
Hence, \eqref{b-32} holds by \eqref{b-31}, \eqref{b-35} and \eqref{b-36}.

\end{proof}

\begin{theorem}\label{them:4.2}
Let $d \geq 3$ and let $(u_0, w_0)$ be initial data with $||u_0||_1 = M_1$ and $||w_0||_1= M_2$ satisfying \eqref{a3}-\eqref{a4}. Then, if $\alpha_1, \alpha_2$ satisfy \eqref{a9c}, then \eqref{a1} has a global free energy solution given in Definition  \ref{def:free-energy-sol}.

\end{theorem}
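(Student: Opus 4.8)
The plan is to derive Theorem~\ref{them:4.2} from the maximal existence statement already in hand. By Theorem~\ref{thm:4.1}, the data \eqref{a3}--\eqref{a4} yield a maximal time $T_{max}\in(0,\infty]$ and a free energy solution $(u,w)$ of \eqref{a1} on $\R^d\times(0,T_{max})$ obeying the blow-up alternative \eqref{b-31}, and, since $\alpha_1,\alpha_2$ satisfy \eqref{a9c}, the sharper alternative \eqref{b-32}: if $T_{max}<\infty$ then both $\|u(\cdot,t)\|_{\frac{2d}{d+2}}$ and $\|w(\cdot,t)\|_{\frac{2d}{d+2}}$ diverge as $t\uparrow T_{max}$. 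Hence the whole proof reduces to a single a priori estimate: showing that these two norms remain bounded on $(0,T_{max})$ by a constant depending only on the data. Granted that, $T_{max}=\infty$ and the solution produced by Theorem~\ref{thm:4.1} is the desired global free energy solution in the sense of Definition~\ref{def:free-energy-sol}.

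For that estimate I would start from the free energy inequality \eqref{a14}. Discarding its two nonnegative dissipation integrals gives $E_\alpha[u(t),w(t)]\le E_\alpha[u_0,w_0]$, i.e.\ \eqref{b-33}; under \eqref{a3}--\eqref{a4} one has $u_0,w_0\in L^{\frac{2d}{d+2}}$, so $E_\alpha[u_0,w_0]$ is finite (finiteness of $H[u_0,w_0]$ coming from HLS \eqref{b3}). Next I would use the decomposition \eqref{a9b}: with $a:=\sqrt{\alpha_1}\,\|u(t)\|_{\frac{2d}{d+2}}$, $b:=\sqrt{\alpha_2}\,\|w(t)\|_{\frac{2d}{d+2}}$, applying HLS with the critical exponents $p=q=\frac{2d}{d+2}$, $\lambda=2$ to the $H$-term yields
\[
E_\alpha[u(t),w(t)]\ \ge\ (a-b)^2+\theta\,ab,\qquad \theta:=2-\frac{c_d\,C_{HLS}}{\sqrt{\alpha_1\alpha_2}}\,.
\]
If \eqref{a9c} is strict then $\theta\in(0,2)$, and the elementary inequality $(a-b)^2+\theta ab\ge\frac{\theta}{2}(a^2+b^2)$ converts $E_\alpha[u(t),w(t)]\le E_\alpha[u_0,w_0]$ into $\alpha_1\|u(t)\|_{\frac{2d}{d+2}}^2+\alpha_2\|w(t)\|_{\frac{2d}{d+2}}^2\le\frac{2}{\theta}\,E_\alpha[u_0,w_0]$, uniformly in $t\in(0,T_{max})$.

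In the borderline case $\theta=0$ (equality in \eqref{a9c}) the last step breaks down — it only controls $(a-b)^2$ — so I would instead invoke the sharper, mass-dependent interpolation bound for $H[u,w]$ proved inside Theorem~\ref{thm:4.1}, namely the modification of Lemma~3.1 of \cite{C-L} leading to \eqref{b-34}, whose right-hand side contains only the \emph{sub}quadratic powers $\|u(t)\|_{\frac{2d}{d+2}}^{\frac{2d}{d+2}}$ and $\|w(t)\|_{\frac{2d}{d+2}}^{\frac{2d}{d+2}}$ together with a constant (the conserved masses $\|u\|_1=M_1$, $\|w\|_1=M_2$ enter here). Since $\frac{2d}{d+2}<2$, Young's inequality absorbs a fixed fraction of $\alpha_1\|u\|^2+\alpha_2\|w\|^2$ into the left side of \eqref{b-33} and again leaves $\|u(t)\|_{\frac{2d}{d+2}}+\|w(t)\|_{\frac{2d}{d+2}}\le C$ on $(0,T_{max})$. (Alternatively, one can feed this bound into Lemmas~\ref{lemma2.3} and \ref{lem:2.4} to bound $\|u(t)\|_\infty+\|w(t)\|_\infty$ and contradict \eqref{b-31} directly.) In either case the uniform $L^{\frac{2d}{d+2}}$ bound contradicts \eqref{b-32} unless $T_{max}=\infty$, which is the assertion.

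I expect the only genuinely delicate point to be exactly this borderline equality $2\sqrt{\alpha_1\alpha_2}/c_d=C_{HLS}$: there the free energy no longer dominates $\|u\|_{\frac{2d}{d+2}}^2$ and $\|w\|_{\frac{2d}{d+2}}^2$ separately, and the argument must rest on the refined interpolation estimate for $H[u,w]$ rather than on bare HLS. If the theorem is intended under the strict form of \eqref{a9c}, this issue evaporates and the proof is just the short computation of the second paragraph combined with \eqref{b-32}.
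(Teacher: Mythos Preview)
Your proposal is correct and follows the paper's overall strategy: reduce Theorem~\ref{them:4.2} to a uniform-in-time bound on $\|u(t)\|_{\frac{2d}{d+2}}$ and $\|w(t)\|_{\frac{2d}{d+2}}$, obtain this from the free energy inequality \eqref{a14}/\eqref{b-33}, and then conclude $T_{max}=\infty$ via Theorem~\ref{thm:4.1} (or, equivalently, via Lemmas~\ref{lemma2.3}--\ref{lem:2.4} and \eqref{b-31}).

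The one genuine difference is how the $L^{\frac{2d}{d+2}}$ bound is extracted. The paper does not split into strict versus borderline cases; it proceeds directly with a mass-dependent interpolation bound on $H[u,w]$ (see \eqref{b-37}), producing on the right-hand side only the subquadratic powers $\|u\|_{\frac{2d}{d+2}}^{\frac{2d}{d+2}}$ and $\|w\|_{\frac{2d}{d+2}}^{\frac{2d}{d+2}}$, which are then absorbed by Young's inequality into the quadratic left-hand side of \eqref{b-33}. Your treatment of the strict case via the decomposition \eqref{a9b} and bare HLS, yielding $E_\alpha\ge(a-b)^2+\theta ab\ge\tfrac{\theta}{2}(a^2+b^2)$, is shorter and more transparent, and makes the role of \eqref{a9c} explicit; the price is that it does not cover equality in \eqref{a9c}, for which you correctly fall back on the same interpolation estimate the paper uses throughout (your reference to \eqref{b-34} and the paper's \eqref{b-37} are the same device). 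So the two proofs agree in structure and in the borderline step; your added observation for the strict regime is a clean simplification.
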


\begin{proof}
We have

\begin{equation}\label{b-37}
\begin{split}
|H[u,w]| &\leq \frac{(d+2)}{2c_d(d-2)} ||u||_{\frac{2d}{d+2}}^{\frac{2d}{d+2}}  \\
	   &+C||w||_1^{\frac{4d^2+ 8d \frac{d+2}{d-2} }{(d-2)^2}} ||w||_{\frac{2d}{d+2}}^{\frac{2d}{d-2}}	\\
             &\leq \frac{d+2}{2c_d(d-2)} \left[ ||u||_{\frac{2d}{d+2}}^{\frac{2d}{d+2}} +  ||w||_{\frac{2d}{d+2}}^{\frac{2d}{d+2}}  \right] + C,
\end{split}
\end{equation}
using the Young's inequality. Combining this with
$$ \alpha_1  ||u||_{\frac{2d}{d+2}}^{2} +  \alpha_2 ||w||_{\frac{2d}{d+2}}^{2}  \leq c_d H[u, w] + E_{\alpha}[u_0, w_0]$$
one deduces that
$$  ||u||_{\frac{2d}{d+2}} \leq C \quad \text{and} \quad    ||w||_{\frac{2d}{d+2}}^2 \leq C,$$
and this implies, by employing Theorem \ref{thm:4.1}, the existence of a free energy solution.

\end{proof}

\section{BLOW-UP}\label{sec:blowup}

In this section we deal with the finite-time blow-up phenomenon. We begin with a simple lemma stating the time-evolution of second moment of solutions.

\begin{lemma}\label{lem:2ndmonetevolution}
Let $(u_0, w_0)$ satisfy \eqref{a3} and \eqref{a4} and let $(u, w)$ be a free energy solution of \eqref{a1} on $[0, T_{max})$ with $T_{max}  \in (0, \infty]$. Then,
\begin{equation*}
\frac{d}{dt} I(t) = G(t), \quad \forall  t \in (0, T_{max}),
\end{equation*}
where
\begin{equation*}
I(t) := \int_{\R^d} |x|^2 \left( u(x,t) + w(x,t)\right) \, dx
\end{equation*}
and
\begin{equation*}
\begin{split}
G(t)  &:= C(\alpha_1, u) \int_{\R^d} u(x,t)^{\frac{2d}{d+2}} \, dx + C(\alpha_2, w)\int_{\R^d} w(x,t)^{\frac{2d}{d+2}} \, dx \\
&-2c_d(d-2)\iint_{\R^d \times \R^d} \frac{u(x,t)w(y,t)}{|x-y|^{d-2}} \, dy \, dx.
\end{split}
\end{equation*}
with
\begin{equation*}
\begin{split}
& C(\alpha_1, u):= 2(d-2)\alpha_1||u||_{\frac{2d}{d+2}}^{4/{d+2}},\\
&C(\alpha_2, w) := 2(d-2)\alpha_2||w||_{\frac{2d}{d+2}}^{4/{d+2}}.
\end{split}
\end{equation*}
\end{lemma}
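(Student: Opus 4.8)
The plan is to obtain the identity by inserting the weight $\phi(x)=|x|^2$ into the weak formulation~\eqref{a12}, moving all derivatives onto $|x|^2$ by two integrations by parts in the diffusion term and one in the drift term. Since $|x|^2$ is not an admissible test function, I would work with $\phi_i(x,s)=|x|^2\psi_R(x)\theta(s)$, where $\psi_R$ is a smooth cutoff with $\psi_R\equiv 1$ on $B_R$, $\supp\psi_R\subset B_{2R}$, $|\nabla\psi_R|\le C/R$, $|D^2\psi_R|\le C/R^2$, and $\theta$ approximates $\caract_{(t_1,t_2)}$ for $0<t_1<t_2<T_{max}$. Letting $\theta\to\caract_{(t_1,t_2)}$ (legitimate since $u,w\in C([0,T);L^1(\R^d))\cap L^\infty$) yields, with $I_R(t):=\int_{\R^d}|x|^2\psi_R(x)(u+w)(x,t)\,dx$,
\[
I_R(t_2)-I_R(t_1)=\int_{t_1}^{t_2}\big[(\mathrm{Diff})_R(s)+(\mathrm{Drift})_R(s)\big]\,ds,
\]
after which I would pass to the limit $R\to\infty$. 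Equivalently, one may carry out the computation on the smooth, fast-decaying solutions $(u_\eps,w_\eps)$ of the regularized system~\eqref{b1}, where no truncation is needed, and then pass to the limit using the convergences established in the proof of Lemma~\ref{lem:2.4}; the algebra is the same.

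For the diffusion term, two integrations by parts give $\int|x|^2\psi_R\,\Delta\big(u^{\frac{2d}{d+2}}\big)=\int\Delta\big(|x|^2\psi_R\big)\,u^{\frac{2d}{d+2}}$, and since $\Delta\big(|x|^2\psi_R\big)\to\Delta|x|^2=2d$ with the correction terms supported in $\{R\le|x|\le 2R\}$ and uniformly bounded there, the limit of the $u$-part equals $2d\cdot\alpha_1\big(\tfrac{d-2}{d}\big)\|u\|_{\frac{2d}{d+2}}^{\frac{4}{d+2}}\int u^{\frac{2d}{d+2}}\,dx=C(\alpha_1,u)\int u^{\frac{2d}{d+2}}\,dx$, and likewise for $w$ with $C(\alpha_2,w)$. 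This produces the first two terms of $G(t)$. For the drift term, one integration by parts gives $-\int|x|^2\psi_R\,\nabla\cdot(u\nabla v)=\int\nabla\big(|x|^2\psi_R\big)\cdot u\nabla v$, with limit $2\int u\,x\cdot\nabla v\,dx$. Differentiating~\eqref{a5} one has $\nabla v(x)=-c_d(d-2)\int_{\R^d}w(y)\frac{x-y}{|x-y|^d}\,dy$, and similarly for $\nabla z$ from~\eqref{a6}; adding the $u$- and $w$-contributions, relabeling $x\leftrightarrow y$ in the second, and using the elementary identity $x\cdot(x-y)+y\cdot(y-x)=|x-y|^2$ collapses the two double integrals into
\[
-2c_d(d-2)\iint_{\R^d\times\R^d}\frac{u(x,s)\,w(y,s)}{|x-y|^{d-2}}\,dy\,dx,
\]
which is exactly the last term of $G(s)$. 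Absolute convergence of this integral — needed both to justify the symmetrization and to pass $R\to\infty$ — follows from the HLS inequality~\eqref{b3} with $\lambda=2$ applied to $u,w\in L^{\frac{2d}{d+2}}(\R^d)$, a bound available for any free energy solution.

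The crux of the argument, and where the hypotheses genuinely enter, is controlling the truncation: one must show that every cutoff-error term carrying $\nabla\psi_R$ or $D^2\psi_R$ (all supported on $R\le|x|\le 2R$) tends to $0$ as $R\to\infty$. For the diffusion errors this uses the uniform $L^r$ bounds on $u,w$ and the $L^2$ bounds on $\nabla u^{\frac{2d}{d+2}},\nabla w^{\frac{2d}{d+2}}$ from Lemmas~\ref{lemma2.3}--\ref{lem:2.4} together with the finiteness of $\int|x|^2(u+w)\,dx$, which is propagated from the data~\eqref{a3}--\eqref{a4}; for the drift errors one additionally invokes HLS and the bounds on $\nabla v,\nabla z$ in~\eqref{b5}. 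Since the kernel $x\cdot(x-y)/|x-y|^d$ is only borderline integrable before symmetrization, one should symmetrize at fixed finite $R$ and remove the cutoff only afterwards — this is the main delicate point. Finally, $G(s)$ is continuous in $s$ by these same estimates (interpolating $C([0,T);L^1)\cap L^\infty$ into $C([0,T);L^{\frac{2d}{d+2}})$ and using HLS), so the limiting relation $I(t_2)-I(t_1)=\int_{t_1}^{t_2}G(s)\,ds$ holds for all $0<t_1<t_2<T_{max}$, which gives $I\in C^1(0,T_{max})$ with $I'(t)=G(t)$, as claimed.
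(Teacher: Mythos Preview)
Your argument is correct and follows the same computational route as the paper: multiply by $|x|^2$, integrate by parts twice in the diffusion (picking up $\Delta|x|^2=2d$ and hence the factor $2d\cdot\alpha_i\tfrac{d-2}{d}=2(d-2)\alpha_i$), and symmetrize the drift contribution via $x\cdot(x-y)+y\cdot(y-x)=|x-y|^2$ to collapse the two kernel integrals into $-2c_d(d-2)H[u,w]$. The paper presents only this formal calculation; your version is more careful in that you actually justify the use of the unbounded weight $|x|^2$ through the cutoff $\psi_R$ and explain why the truncation errors vanish using the moment bound, the $L^r$ and $\nabla v,\nabla z$ estimates, and HLS --- points the paper leaves implicit.
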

Notice that with the definitions,
\begin{equation}\label{bu1}
G(t) = 2(d-2)E_\alpha[u(t),v(t)]\ .
\end{equation}

\begin{proof}
Straightforward calculations lead to
\begin{equation*}
\begin{split}
&\frac{d}{dt} I(t) = \int_{\R^d} |x|^2 \left(  u(x,t) + w(x,t)  \right) \, dx\\
&= \int_{\R^d} |x|^2 \Big[C(\alpha_1, u)\Delta \left(u^{\frac{2d}{d+2}}\right)- \nabla \cdot \left(u \nabla v\right) \\
&\qquad  \qquad \qquad + C(\alpha_2, w)\Delta \left(w^{\frac{2d}{d+2}}\right)- \nabla \cdot \left(w \nabla z\right) \Big]\, dx \\
&= 2dC(\alpha_1, u) \int_{\R^d} u(x,t)^{\frac{2d}{d+2}} \, dx + C(\alpha_2, w)\int_{\R^d} w(x,t)^{\frac{2d}{d+2}} \, dx \\
&\quad + 2 \iint_{\R^d \times \R^d} \left[   x \cdot \nabla \mathcal{K}(x-y)  \right]  u(x,t)w(y,t) \, dy \, dx \\
&\qquad + 2 \iint_{\R^d \times \R^d} \left[   x \cdot \nabla \mathcal{K}(x-y)  \right]  u(y,t)w(x,t) \, dy \, dx,
\end{split}
\end{equation*}
with $\mathcal{K}(x) = \frac{c_d}{|x|^{d-2}}$ and $c_d$ is the surface area of the sphere $S^{d-1}$ in $\R^d$.
Now,
\begin{equation*}
\begin{split}
&2\iint_{\R^d \times \R^d} \left[x \cdot  \nabla \mathcal{K}(x-y) \right] u(x,t) w(y,t) \,dy \, dx \\
& = -2c_d(d-2)\iint_{\R^d \times \R^d} \frac{(x-y) \cdot x}{|x-y|^d} u(x,t) w(y,t) \,dy \, dx \\
& = -2c_d(d-2)\iint_{\R^d \times \R^d} \frac{|x|^2}{|x-y|^d} u(x,t) w(y,t) \,dy \, dx \\
& \quad + 2c_d(d-2)\iint_{\R^d \times \R^d} \frac{y \cdot x}{|x-y|^d} u(x,t) w(y,t) \,dy \, dx \\
&= -c_d(d-2)\iint_{\R^d \times \R^d} \frac{|x|^2}{|x-y|^d} u(x,t) w(y,t) \,dy \, dx \\
& \quad - c_d(d-2)\iint_{\R^d \times \R^d} \frac{|y|^2}{|x-y|^d} u(x,t) w(y,t) \,dy \, dx \\
& \qquad + 2c_d(d-2)\iint_{\R^d \times \R^d} \frac{y \cdot x}{|x-y|^d} u(x,t) w(y,t) \,dy \, dx.
\end{split}
\end{equation*}
Similarly,
\begin{equation*}
\begin{split}
&2\iint_{\R^d \times \R^d} \left[x \cdot  \nabla \mathcal{K}(x-y) \right] u(y,t) w(x,t) \,dy \, dx \\
&= -c_d(d-2)\iint_{\R^d \times \R^d} \frac{|x|^2}{|x-y|^d} u(y,t) w(x,t) \,dy \, dx \\
& \quad - c_d(d-2)\iint_{\R^d \times \R^d} \frac{|y|^2}{|x-y|^d} u(x,t) w(y,t) \,dy \, dx \\
& \qquad + 2c_d(d-2)\iint_{\R^d \times \R^d} \frac{y \cdot x}{|x-y|^d} u(x,t) w(y,t) \,dy \, dx.
\end{split}
\end{equation*}
Combining, we deduce that
\begin{equation*}
\begin{split}
&\frac{d}{dt}\int_{\R^d} |x|^2 \left( u(x,t)+ w(y,t) \right) \, dx = 2dC(\alpha_1, u) \int_{\R^d} u^{\frac{2d}{d+2}} \, dx \\
&\qquad \qquad +  2dC(\alpha_2, w) \int_{\R^d} w^{\frac{2d}{d+2}} \, dx \\
&-c_d(d-2)\iint_{\R^d \times \R^d} \frac{|x|^2+|y|^2}{|x-y|^d} u(x,t)w(y,t) \, dy \, dx \\
&-c_d(d-2)\iint_{\R^d \times \R^d} \frac{|x|^2+|y|^2}{|x-y|^d} u(y,t)w(x,t) \, dy \, dx \\
&+ 4c_d(d-2)\iint_{\R^d \times \R^d} \frac{y \cdot x}{|x-y|^d} u(x,t) w(y,t) \,dy \, dx\\
& =2dC(\alpha_1, u) \int_{\R^d} u^{\frac{2d}{d+2}} \, dx +  2dC(\alpha_2, w) \int_{\R^d} w^{\frac{2d}{d+2}} \, dx \\
&\qquad -2c_d(d-2)\iint_{\R^d \times \R^d} \frac{u(x,t) w(y,t) }{|x-y|^{d-2}} \, dy \, dx.
\end{split}
\end{equation*}

\end{proof}

We now observe that
\begin{equation}\label{q1}
\begin{split}
\frac{d}{dt}I(t) = G(t)  &= 2(d-2)E_{\alpha}[u(t), w(t)]\\
	               &\leq   2(d-2)E_{\alpha}[u_0, w_0] = G(0).		
\end{split}
\end{equation}
As we have seen above, $\alpha_1$ and $\alpha_2$ are such that  \eqref{a9c} is violated,  then we may take the initial data
 $(u_0, w_0)$ to consist of appropriate multiples of $HLS$ optimizers, and then we shall have $E_\alpha[u_0,w_0] < 0$, and hence $G(0) < 0$.   By the montonicity if the energy, we will then have
 $G(t) \leq G(0) < 0$ for all $t$ such that a regular solution exists. But then \eqref{q1} will imply that the second moment will be negative after some time which is impossible  by the non-negativity of $u$ and $w$.  Hence blow-up occurs in a finite time.

\begin{theorem}\label{thm:blow-up}
Let $\alpha_1, \alpha_2$ be such that \eqref{a9c} is violated. Then one can find some initial data $(u_0, w_0)$  satisfying \eqref{a3} and \eqref{a4} such that free energy solution $(u, w)$ of \eqref{a1} with $(u, w)|_{t=0} = (u_0, w_0)$ blows up in finite time.

\end{theorem}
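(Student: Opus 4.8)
The strategy is exactly the virial / second-moment argument sketched in the paragraph preceding the statement, so I would only need to organize it carefully and verify that the chosen initial data genuinely lie in the admissible class \eqref{a3}--\eqref{a4}. The plan is as follows. First, assume for contradiction that for the initial data to be constructed the free energy solution $(u,w)$ exists on a maximal interval $[0,T_{max})$ with $T_{max}=\infty$ (or, more generally, that it exists globally). By Lemma \ref{lem:2ndmonetevolution} together with the identity \eqref{bu1}, the second moment $I(t)=\int_{\R^d}|x|^2(u+w)\,dx$ satisfies $\frac{d}{dt}I(t)=G(t)=2(d-2)E_\alpha[u(t),w(t)]$, and by the free-energy dissipation inequality \eqref{a14} in Definition \ref{def:free-energy-sol} one has $E_\alpha[u(t),w(t)]\le E_\alpha[u_0,w_0]$ for all $t$. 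Hence $\frac{d}{dt}I(t)\le 2(d-2)E_\alpha[u_0,w_0]=G(0)$ for all $t\in(0,T_{max})$, which integrates to $I(t)\le I(0)+2(d-2)E_\alpha[u_0,w_0]\,t$.

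The second step is to choose the initial data so that $E_\alpha[u_0,w_0]<0$. Since \eqref{a9c} is violated, set $\delta:=C_{HLS}-\frac{2\sqrt{\alpha_1\alpha_2}}{c_d}>0$. Following the computation in the introduction after \eqref{a9c}, take $u_0$ and $w_0$ to be suitably normalized HLS optimizers for the exponent pair considered (the Lieb optimizers, which are explicit, smooth, positive, and decay polynomially), scaled so that $\sqrt{\alpha_1}\,\|u_0\|_{\frac{2d}{d+2}}=\sqrt{\alpha_2}\,\|w_0\|_{\frac{2d}{d+2}}$; then from the rewriting \eqref{a9b} the first square vanishes and $E_\alpha[u_0,w_0]=-\delta\,\|u_0\|_{\frac{2d}{d+2}}\|w_0\|_{\frac{2d}{d+2}}<0$. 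One must check that these optimizers, after truncation/modification at infinity if necessary, satisfy \eqref{a3}--\eqref{a4}: they are in $L^1\cap L^\infty$, nonnegative, have $\nabla u_0^{\frac{2d}{d+2}}\in L^2$, and have finite second moment — the decay rate of the Lieb optimizers is $|x|^{-(d+2)}$ up to constants for the extremal profile $u_0^{2d/(d+2)}\sim (1+|x|^2)^{-d}$, which gives $|x|^2 u_0 \in L^1$ in $d\ge 3$ (and if a borderline decay issue arises, replace $u_0$ by a compactly supported near-optimizer, which only shrinks $\delta$ slightly while keeping $E_\alpha[u_0,w_0]<0$). Scaling $u_0\mapsto \lambda u_0$, $w_0\mapsto\lambda w_0$ makes $E_\alpha[u_0,w_0]$ as negative as we like (it scales like $\lambda^2$), which is where "large initial data" enters.

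With $E_\alpha[u_0,w_0]<0$ fixed, the first step gives $I(t)\le I(0)-2(d-2)|E_\alpha[u_0,w_0]|\,t$, and since $I(0)=\int|x|^2(u_0+w_0)\,dx<\infty$ by \eqref{a3}--\eqref{a4}, the right-hand side becomes negative at $t_\ast:=\dfrac{I(0)}{2(d-2)|E_\alpha[u_0,w_0]|}$. But $I(t)\ge 0$ for all $t$ by nonnegativity of $u$ and $w$ — a contradiction. Therefore $T_{max}\le t_\ast<\infty$, i.e. the solution blows up in finite time, and by \eqref{b-31} this means $\|u(\cdot,t)\|_\infty+\|w(\cdot,t)\|_\infty\to\infty$ as $t\to T_{max}$. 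The only genuinely delicate point is the second step: ensuring that the HLS-optimizer initial data (or a near-optimizer replacement) simultaneously lies in the admissible class \eqref{a3}--\eqref{a4} and keeps $E_\alpha[u_0,w_0]$ strictly negative; the virial inequality itself is an immediate consequence of Lemma \ref{lem:2ndmonetevolution} and the energy-dissipation inequality already built into the definition of free-energy solution. One small caveat worth noting in the write-up: Lemma \ref{lem:2ndmonetevolution} is stated with an equality $\frac{d}{dt}I=G$, but for the contradiction only the differential inequality $\frac{d}{dt}I(t)\le G(0)$ is needed, so even a distributional/one-sided version suffices and no extra regularity of $I$ beyond absolute continuity is required.
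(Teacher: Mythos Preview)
Your proposal is correct and follows essentially the same virial/second-moment argument as the paper: combine Lemma \ref{lem:2ndmonetevolution}, the identity \eqref{bu1}, and the energy-dissipation inequality \eqref{a14} to obtain $I'(t)\le G(0)<0$, then integrate to contradict $I\ge 0$. If anything, your organization is slightly cleaner than the paper's formal proof (which interposes a continuity argument for $G(t)<G(0)/2$ where you invoke energy monotonicity directly), and you address the admissibility of the HLS-optimizer initial data more carefully than the paper does.
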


\begin{proof}
We consider an initial data $(u_0, w_0)$ with $G(0) < 0.$ By a continuity argument there exists a $T^* > 0$ such that
$$ G(t) < \frac{G(0)}{2} \quad \text{for all} \quad t \in [0, T^*].$$
By Lemma  \ref{lem:2ndmonetevolution}
\begin{equation}\label{q2}
\frac{d}{dt} I(t)  = G(t) < \frac{G(0)}{2}, \quad \text{for all} \quad t \in [0, T^*].
\end{equation}
Integrating \eqref{q2}, it follows that
\begin{equation}\label{q3}
I(T^*) < I(0) +  \frac{G(0)}{2}T^*.
\end{equation}
We may choose the initial data in such a way that the right hand side of \eqref{q3} is negative. But this leads to a contradiction since it implies that $I(T^*) < 0$ but $I(t)$ is always non-negative for all $t>0.$ Hence the solution $(u(t), w(t))$ blows-up in finite time.

\end{proof}

\section{Acknowledgments} \label{ack}

The work of E. A. Carlen   is partially supported by U.S. N.S.F. grant DMS DMS-2055282.

\end{document}